\crefname{hypothesis}{Hypothesis}{Hypotheses}
\newtheorem{remark}{Remark}
\title{Simplified error bounds for turning point expansions}
\author{T. M. Dunster\thanks{Department of Mathematics and Statistics, San Diego State University, 5500 Campanile Drive, San Diego, CA 92182, USA. 
  (\email{mdunster@sdsu.edu}, \url{https://tmdunster.sdsu.edu}).}
\and A. Gil\thanks{Departamento de Matem\'atica Aplicada y CC. de la Computaci\'on, ETSI Caminos, Universidad de Cantabria, 39005-Santander, Spain. 
  (\email{amparo.gil@unican.es}). }
\and J. Segura \thanks{Departamento de Matem\'aticas, Estad\'{\i}stica y Computaci\'on, 
Facultad de Ciencias, 
Universidad de Cantabria, 39005-Santander, Spain. 
  (\email{javier.segura@unican.es}). } }
\newcommand*{\addFileDependency}[1]{
  \typeout{(#1)}
  \@addtofilelist{#1}
  \IfFileExists{#1}{}{\typeout{No file #1.}}
}
\newcommand*{\myexternaldocument}[1]{%
    \externaldocument{#1}%
    \addFileDependency{#1.tex}%
    \addFileDependency{#1.aux}%
}
\begin{document}

\maketitle

\begin{abstract}
  Recently, the present authors derived new asymptotic expansions for linear differential equations having a simple turning point. These involve Airy functions and slowly varying coefficient functions, and were simpler than previous approximations, in particular being computable to a high degree of accuracy. Here we present explicit error bounds for these expansions which only involve elementary functions, and thereby provide a simplification of the bounds associated with the classical expansions of F. W. J. Olver.
\end{abstract}

\begin{keywords}
  {Asymptotic expansions, Airy functions, Turning point theory, WKB methods}
\end{keywords}

\begin{AMS}
  34E05, 33C10, 34E20
\end{AMS}

\section{Introduction}
In this paper we obtain error bounds for a recent form of asymptotic
expansions for linear differential equations having a simple turning point. The differential equations we study are of the form 
\begin{equation}
d^{2}w/dz^{2}=\left\{u^{2}f(z) +g(z)\right\} w,
\label{eq1}
\end{equation}
where $u$ is a large parameter, real or complex, and $z$ lies in a complex domain which may be unbounded. Many special functions satisfy equations of this form. The functions $f(z)$ and $g(z)$ are meromorphic in a certain domain $Z$ (precisely defined below), and are independent of $u$ (although the latter restriction can often be relaxed without undue difficulty). We further assume that $f(z)$ has no zeros in $Z$ except for a simple zero at $z=z_{0}$, which is the turning point of the equation.

From standard Liouville transformations we have two new variables, namely $\xi$ (for Liouville-Green expansions involving elementary functions) and $\zeta$ (for turning point expansions involving Airy functions). These are given by
\begin{equation}
\xi=\tfrac{2}{3}\zeta ^{3/2}= \int_{z_{0}}^{z} f^{1/2}(t) dt.
\label{eq2}
\end{equation}
We choose the branch here so that $\xi$ is positive when $\zeta$ approaches $0$ through positive values, and by continuity elsewhere. Note $\zeta$ is an analytic function of $z$ at $z=z_{0}$ ($\zeta=0$) whereas $\xi$ has a branch point at the turning point.

With $\zeta$ defined as above and
\begin{equation}
W(u,\zeta) =\zeta ^{-1/4}f^{1/4}(z) w(u,z), 
\label{eq2a}
\end{equation}
the differential equation (\ref{eq1}) is transformed to 
\begin{equation}
d^{2}W/d\zeta ^{2}=\left\{ u^{2}\zeta +\psi(\zeta) \right\} W,
\label{eq3}
\end{equation}
where 
\begin{equation}
\psi (\zeta) =\tfrac{5}{16}\zeta ^{-2}+\zeta \Phi(z),
\label{eq4}
\end{equation}
in which 
\begin{equation}
\Phi(z) =\frac{4f(z) {f}^{\prime \prime }(z) -5{f}^{\prime 2}(z) }{16f^{3}(z) }+\frac{g(z) }{f(z) }. 
\label{eq5}
\end{equation}

The turning point $z=z_{0}$ of (\ref{eq1}) is mapped to the turning point $\zeta =0$ of (\ref{eq3}). If near $z=z_{0}$ the functions $f(z) $
and $f(z) $ have the following Taylor expansions 
\begin{equation}
f(z) =\sum\limits_{n=1}^{\infty }{f}_{n}\left( z-z_{0}\right)
^{n},\ g(z) =\sum\limits_{n=0}^{\infty }{g}_{n}\left(
z-z_{0}\right) ^{n}  
\label{fgtaylor},
\end{equation}
where $f_{1}\neq 0$, then from (\ref{eq2}), (\ref{eq4}) and (\ref{eq5}) we
find that 
\begin{equation}
\lim_{\zeta \rightarrow 0}\psi (\zeta) =\left( f_{1}\right)
^{-8/3}\left\{ \tfrac{9}{16}f_{1}f_{3}-\tfrac{3}{10}\left( f_{2}\right)
^{2}+\left( f_{1}\right) ^{2}g_{0}\right\}. 
\label{psi(0)}
\end{equation}
We define $\psi (0) $ to take this value, hence rendering $\psi(\zeta)$ analytic at the turning point (which otherwise would be a removable singularity).

Following \cite[Chap. 11, Sect. 8.1]{Olver:1997:ASF} we define three sectors 
\begin{equation}
\mathrm{\mathbf{S}}_{j}=\left\{ \zeta :\left\vert {\arg \left(\zeta e^{-2\pi ij/3}\right) }\right\vert \leq {\tfrac{1}{3}}\pi 
\right\} \ \left( {j=0,\pm 1}\right).
\label{eq6a}
\end{equation}
We also define $\mathrm{\mathbf{T}}_{j}$ to be $\mathrm{\mathbf{S}}_{j}$ rotated negatively by the angle $\tfrac{2}{3}\arg(u)$, so that
\begin{equation}
\mathrm{\mathbf{T}}_{j}=\left\{ \zeta :\left\vert {\arg \left( {u^{2/3}\zeta e^{-2\pi ij/3}}\right) }\right\vert \leq {\tfrac{1}{3}}\pi 
\right\} \ \left( {j=0,\pm 1}\right).
\label{eq6}
\end{equation}

Neglecting $\psi (\zeta)$ in (\ref{eq3}) we obtain the
so-called comparison equation $d^{2}W/d\zeta ^{2}={u^{2}\zeta }W$. This has
numerically satisfactory solutions in terms of the Airy function, namely $\mathrm{Ai}_{j}\left( {u^{2/3}\zeta }\right) :=\mathrm{Ai}\left( {u^{2/3}\zeta e}^{-2\pi ij/3}\right)$ ($j=0,\pm 1$). For large $|u|$ these are characterized as being recessive for $\zeta \in \mathrm{\mathbf{T}}_{j}$ and dominant elsewhere.

In \cite{Olver:1964:EBF} and \cite[Chap. 11, Theorem 9.1]{Olver:1997:ASF} Olver obtained three asymptotic solutions to (\ref{eq1}) in the complex plane, of the form 
\begin{multline}
w_{2n+1,j}(u,z) =\left\{ \dfrac{\zeta }{f(z) }\right\}
^{1/4}\left\{ \mathrm{Ai}_{j}\left( {u^{2/3}\zeta }\right)
\sum\limits_{s=0}^{n}\dfrac{{A_{s}(\zeta) }}{u^{2s}}\right. 
\\ 
\left. +\dfrac{\mathrm{Ai}_{j}^{\prime }\left( {u^{2/3}\zeta }\right) }{u^{4/3}
}\sum\limits_{s=0}^{n-1}\dfrac{{B_{s}(\zeta) }}{u^{2s}}
+\varepsilon _{2n+1,j}(u,\zeta) \right\}, 
\label{Olvertp}
\end{multline}
and explicit bounds on the error terms $\varepsilon _{2n+1,j}(u,\zeta) $ were given. However these bounds are quite complicated since they involve the coefficients $A_{s}(\zeta)$ and $B_{s}(\zeta)$ which themselves are hard to compute (due to iterated integration). An added complication is that the bounds involve so-called auxiliary functions for Airy functions (see \cite[Chap. 11, Sect. 8.3]{Olver:1997:ASF}).

In \cite{Dunster:2017:COA} new asymptotic expansions were derived for solutions of (\ref{eq1}) that involved coefficients which are much simpler to evaluate. In this paper we obtain error bounds for these expansions, and these too are much easier to compute than Olver's. Our new bounds only involve explicitly defined coefficients, along with elementary functions, and in particular do not require complicated auxiliary functions or nested integration.

Let us present the main results from \cite{Dunster:2017:COA}. In the following the use of a circumflex (\textasciicircum) is in accord with the notation of this paper, and is used to distinguish certain functions and paths that are defined in terms of $z$ rather than $\xi$.

Firstly we define the set of coefficients 
\begin{equation}
\hat{F}_{1}(z) ={\tfrac{1}{2}}\Phi (z) ,\ \hat{F}_{2}(z) =-{\tfrac{1}{4}}f^{-1/2}(z) {\Phi }^{\prime
}(z),
\label{eq8}
\end{equation}
and 
\begin{equation}
\hat{F}_{s+1}(z) =-\tfrac{1}{2}f^{-1/2}(z) \hat{{F}
}_{s}^{\prime }(z) -\tfrac{1}{2}\sum\limits_{j=1}^{s-1}{\hat{F}
_{j}(z) \hat{F}_{s-j}(z) }\ \left( {s=2,3,4\cdots }
\right).
\label{eq9}
\end{equation}
The odd coefficients appearing in the asymptotic expansions are then given by
\begin{equation}
\hat{E}_{2s+1}(z) =\int {\hat{F}_{2s+1}(z)
f^{1/2}(z) dz}\ \left( {s=0,1,2,\cdots }\right),
\label{eq7}
\end{equation}
where the integration constants must be chosen so that each $\left(
z-z_{0}\right) ^{1/2}\hat{E}_{2s+1}(z) $ is meromorphic
(non-logarithmic) at the turning point. As shown in \cite{Dunster:2017:COA}, the even ones can be determined without any integration, via the formal expansion
\begin{equation}
\sum\limits_{s=1}^{\infty }\dfrac{\hat{E}_{2s}(z) }{u^{2s}}
\sim -\frac{1}{2}\ln \left\{ 1+\sum\limits_{s=0}^{\infty }\dfrac{\hat{F}_{2s+1}(z) }{u^{2s+2}}\right\} +\sum\limits_{s=1}^{\infty }\dfrac{{\alpha }_{2s}}{u^{2s}},
\label{even}
\end{equation}
where each ${\alpha }_{2s}$ is arbitrarily chosen. These too are meromorphic at the turning point. We remark that the coefficients $\hat{F}_{s}(z) $ can be obtained explicitly, along with the even terms $\hat{E}_{2s}(z) $, with each of the odd terms $\hat{E}_{2s+1}(z) $ requiring just one integration of an explicitly determined function, either explicitly with the aid of symbolic software, or by quadrature.

We next define two sequences $\left\{ a_{s}\right\} _{s=1}^{\infty }$ and $\left\{ \tilde{a}_{s}\right\} _{s=1}^{\infty }$ by $a_{1}=a_{2}={\frac{5}{72}}$, $\tilde{a}_{1}=\tilde{a}_{2}=-{\frac{7}{72}}$, with subsequent terms $a_{s}$ and $\tilde{a}_{s}$ ($s=2,3,\cdots $) satisfying the same recursion formula 
\begin{equation}
b_{s+1}=\tfrac{1}{2}\left( {s+1}\right) b_{s}+\tfrac{1}{2}
\sum\limits_{j=1}^{s-1}{b_{j}b_{s-j}}.
\label{arec}
\end{equation}
Then let
\begin{equation}
\mathcal{E}_{s}(z) =\hat{E}_{s}(z) +
(-1)^{s}a_{s}s^{-1}\xi ^{-s},
\label{eq40}
\end{equation}
and
\begin{equation}
\tilde{\mathcal{E}}_{s}(z) =\hat{E}_{s}(z) +(-1) ^{s}\tilde{a}_{s}s^{-1}\xi^{-s},
\label{eq38}
\end{equation}
where $\xi$ is given by (\ref{eq2}).

In \cite{Dunster:2017:COA} it was then shown that there exist solutions of the form
\begin{equation}
w_{j}(u,z) =f^{-1/4}(z) \zeta ^{1/4}\left\{ \mathrm{Ai}_{j}\left(u^{2/3}\zeta \right) A(u,z) +\mathrm{Ai}_{j}^{\prime }\left( u^{2/3}\zeta \right) B(u,z) \right\},
\label{wjs}
\end{equation}
where
\begin{equation}
A(u,z) \sim {\exp \left\{ \sum\limits_{s=1}^{\infty }\dfrac{
\tilde{\mathcal{E}}_{2s}(z) }{u^{2s}}\right\} \cosh \left\{
\sum\limits_{s=0}^{\infty }\dfrac{\tilde{\mathcal{E}}_{2s+1}(z) 
}{u^{2s+1}}\right\} },
\label{Aexp}
\end{equation}
and
\begin{equation}
B(u,z) \sim \frac{1}{u^{1/3}\zeta ^{1/2}}\exp \left\{ 
\sum\limits_{s=1}^{\infty }{\dfrac{\mathcal{E}_{2s}(z) }{u^{2s}}}\right\} \sinh \left\{ \sum\limits_{s=0}^{\infty }\dfrac{\mathcal{E}
_{2s+1}(z) }{u^{2s+1}}\right\} , 
\label{Bexp}
\end{equation}
in certain complex domains, which we describe in detail in \cref{sec2}.

In this paper we truncate the expansions appearing in (\ref{Aexp}) and (\ref{Bexp}) after a finite number of terms, and obtain bounds for the resulting error terms. So rather than the one error term of (\ref{Olvertp}) and its associated complicated bound, we derive separate error bounds for both the $A(u,z)$ and $B(u,z)$ approximations, and this obviates the need for Airy auxiliary functions, since these functions are slowly-varying throughout the asymptotic region of validity. 

We remark error bounds without auxiliary functions were obtained by Boyd in \cite{Boyd:1987:AEF}, but like Olver's expansions (\ref{Olvertp}) his bounds involve the complicated coefficients $A_{s}(\zeta)$ and $B_{s}(\zeta)$, and required successive approximations. They are consequently more complicated and not easy to compute beyond one term in an expansion. In \cite{Dunster:2001:CEF} convergent expansions were derived for the $A(u,z) $ and $B(u,z) $ coefficient functions, but again these are difficult to compute because they also involve coefficients that are hard to evaluate due to iterated integration. In \cite{Dunster:2014:OEB}  asymptotic solutions of (\ref{eq3}) were derived which involved just the Airy function alone (and not its derivative), and where an asymptotic expansion appeared in the argument of this approximant. Error bounds were given, but as in \cite{Boyd:1987:AEF} and \cite[Chap. 11, Theorem 9.1]{Olver:1997:ASF} these are hard to compute.

The importance of explicit error bounds for asymptotic approximations was dem\-onstrated in an expository paper by Olver in \cite{Olver:1980:AAA}. Olver noted how explicit error bounds can provide useful analytical insight into the nature and reliability of the approximations, enable somewhat unsatisfactory concepts such as multiple asymptotic expansions and generalized asymptotic expansions to be avoided, and lead to significant extensions of asymptotic results. 

On the other hand, from a computational point of view, turning point 
uniform asymptotic expansions are important tools which have been considered
for the efficient computation of a good number of special functions. Examples are the algorithms for 
Bessel functions of real argument and
 complex variable of \cite{Amos:1986:A6A} (based on expansions from \cite{Olver:1997:ASF}), the methods for 
modified Bessel functions of imaginary order of \cite{Gil:2004:CSO} (with expansions from \cite{Dunster:1990:BFO}) and the algorithm for parabolic cylynder functions of \cite{Gil:2006:CTR} (see also \cite{Temme:2000:NAA}). 

In the algorithms \cite{Amos:1986:A6A,Gil:2004:CSO,Gil:2006:CTR}, no error bounds are used for establishing the 
accuracy of the uniform expansions; instead this is certified by checking consistency with other methods of evaluation. 
The reason for this lies in the difficulty of computing error bounds for these expansions. The use of error bounds
 for asymptotic expansions in numerical algorithms is in fact very rare, and we only find examples for expansions 
of Poincar\'e type (see for instance \cite{Fabijonas:2004:COC}). In this paper, we develop computable error bounds for 
turning point expansions, thus opening the possibility of using strict error bounds for the numerical computations with turning 
point asymptotics. A related effort in this direction is that of \cite{Wei:2010:EBF}.

The paper is organized as follows. In \Cref{sec2} we use the new results given in \cite{Dunster:2020:LGE} which provide explicit and simple error bounds for Liouville-Green (LG) expansions of exponential form. These rarely-used expansions were used in \cite{Dunster:2017:COA} to obtain (\ref{Aexp}) and (\ref{Bexp}). We apply Dunster's new results to obtain three fundamental LG asymptotic solutions of (\ref{eq3}) complete with error bounds (which are easy to compute). Also in this section we derive an important connection relation between the three solutions. In addition, we obtain similar expansions, with error bounds, for the Airy functions of complex argument that appear in (\ref{wjs}). Both these new connection relations and Airy expansions are used in the subsequent sections, but it is worth remarking that they are interesting and useful in their own right.

The results of \Cref{sec2} are then applied in \Cref{sec3} to obtain the desired error bounds for the expansions (\ref{Aexp}) and (\ref{Bexp}) for $z$ not too close to the turning point. These in turn are used in \Cref{sec4} to obtain error bounds for $z$ lying in a bounded domain which includes the turning point. As in \cite{Dunster:2017:COA}, the method is to express the asymptotic solutions as a Cauchy integral around a simple positively orientated loop surrounding the turning point, and bounding the error along the loop. 

In \Cref{sec5} we illustrate the new results of \Cref{sec3} with an application to Bessel functions of large order. We show how the new simplified expansions and accompanying error bounds can be constructed, how these can then be matched to the exact solutions, and include some numerical examples of the performance of the bounds.

\section{Liouville-Green expansions and connection coefficients}
\label{sec2}

Here we pre\-sent Liouville-Green expansions of exponential form for three numerically satisfactory solutions of (\ref{eq3}), complete with error bounds. To do so we shall employ the new results given in \cite{Dunster:2020:LGE}. We then use these expansions to obtain a connection relation between the three solutions, which will be used in our subsequent error analysis for the expansions (\ref{wjs}) - (\ref{Bexp}).

We begin by defining certain domains. Firstly, we partition each of the sectors in (\ref{eq6}) by $\mathrm{\mathbf{T}}_{j}=\mathrm{\mathbf{T}}
_{j,k}\cup \mathrm{\mathbf{T}}_{j,l}$ ($j,k,l\in \left\{0,1,-1\right\}$, $j\neq k\neq l\neq j$), where $\mathrm{\mathbf{T}}_{j,k}$ is the closed
subsector of angle $\pi /3$ and adjacent to $\mathrm{\mathbf{T}}_{k}$; for
example $\mathrm{\mathbf{T}}_{0,1}=\left\{ \zeta {:0\leq \arg \left( {u^{2/3}
}\zeta \right) \leq {\tfrac{1}{3}}\pi }\right\} $. We denote $T_{j}$ (respectively $T_{j,k}$) to be the region in the $z$ plane corresponding to the sector $\mathrm{\mathbf{T}}_{j}$
(respectively $\mathrm{\mathbf{T}}_{j,k}$) in the $\zeta $ plane. See \Cref{fig:fig1} for some typical regions in the right half plane for the case $z_{0}$ and $u$ positive.

\begin{figure}[htbp]
  \centering
  \includegraphics{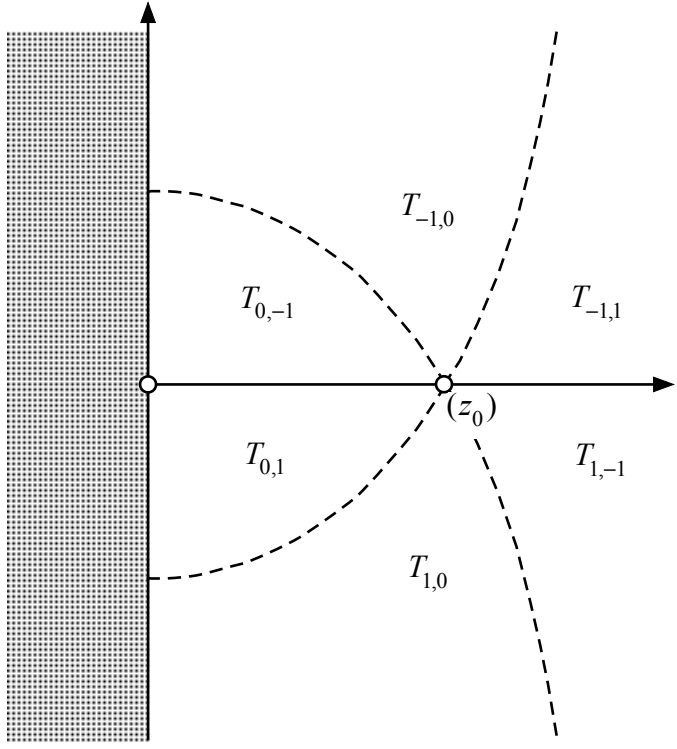}
  \caption{Regions $T_{j,k}$ in $z$ plane for $u$ positive.}
  \label{fig:fig1}
\end{figure}

Next, let $Z$ be the $z$ domain containing $z=z_{0}$ in which $f(z)$ has no other zeros, and in which $f(z)$ and $g(z)$ are meromorphic, with poles (if any) at finite points, at $z=w_{j}$ ($j=1,2,3,\cdots $), say, such that at $z=w_{j}$ (see \cite[Chap. 10, Thm. 4.1]{Olver:1997:ASF}):

(i) $f(z)$ has a pole of order $m>2$, and $g(z)$ is analytic or has a pole of order less than $\frac{1}{2}m+1$, or

(ii) $f(z)$ and $g(z)$ have a double pole, and 
$\left( z-w_{j}\right) ^{2}{g(z) \rightarrow -}\frac{1}{4}$ as $z\rightarrow w_{j}$.

We shall call these\textit{\ admissible poles}. In some applications the
parameter $u$ in (\ref{eq1}), and hence $g(z)$, can be
redefined by a translation to make a pole admissible (which would not be otherwise).

For $j=0,\pm 1$ choose an arbitrary $z^{(j) }\in T_{j}\cap Z$.
These can be chosen at an ordinary point, at an admissible pole, or at infinity if $f(z)$ and $g(z)$ can be expanded in convergent series in a neighborhood of $z=\infty $ of the form
\begin{equation}
f(z)=z^{m}\sum\limits_{s=0}^{\infty }f_{s}z^{-s},\ g(z)=z^{p}\sum\limits_{s=0}^{\infty }g_{s}{z}^{-s},
\label{fginfinity}
\end{equation}
where $f_{0}\neq 0$, $g_{0}\neq 0$, and either $m$ and $p$ are integers such that $m>-2$ and $p<\frac{1}{2}m-1$, or $m=p=-2$ and $g_{0}=-\frac{1}{4}$. For details and generalizations of (\ref{fginfinity}) see \cite[Chap. 10, Sects. 4 and 5]{Olver:1997:ASF}. In this paper we assume that each $z^{(j)}$ is chosen at infinity satisfying the above conditions, or at an admissible pole.

For each\ $j=0,\pm 1$\ the following LG region of validity $Z_{j}(u,z^{(j)})$ (abbreviated $Z_{j}$) then comprises the $z$ point set for which there is a path $\hat{\mathcal{L}}_{j}$ linking $z$ with $z^{(j) }$ in $Z$ and having the properties (i) $\hat{\mathcal{L}}_{j} $ consists of a finite chain of $R_{2}$ arcs (as defined in \cite[Chap. 5, sec. 3.3]{Olver:1997:ASF}), and (ii) as $v$ passes along $\hat{\mathcal{L}}_{j} $ from $z^{(j) }$ to $z$, the real part of $(-1)^{j}u\xi(v)$ is nonincreasing,
where $\xi(v)$is given by (\ref{eq2}) with $z=v$, and with the chosen sign fixed throughout. Following Olver \cite[Chap. 6, sec. 11]{Olver:1997:ASF} these are called \textit{progressive paths}.

Typically one would choose each $z^{(j) }$ to maximize the size of $Z_{j}(u,z^{(j)}) $; for example, if $\theta=\arg(u)$ and the positive sign is chosen in (\ref{eq2}), one might choose $
z^{(j) }$ corresponding to $\xi =\xi ^{(j) }:=\infty
\exp \left\{ -i\theta +ij\pi \right\} $; in this case $z^{(j) }$ would either also be at infinity (provided (\ref{fginfinity})\ holds), or be
an admissible pole.

We now apply \cite{Dunster:2020:LGE} to (\ref{eq3}), and this leads to the following.
\begin{theorem}
\label{thm:2.1}
Let three solutions of (\ref{eq3}) be given by 
\begin{equation}
W_{0}(u,\zeta) =\frac{1}{\zeta ^{1/4}}\exp \left\{ -u\xi
+\sum\limits_{s=1}^{n-1}{(-1) ^{s}\frac{\hat{E}_{s}\left(
z\right) -\hat{E}_{s}\left( {z^{(0) }}\right) }{u^{s}}}
\right\} \left\{ 1+\eta _{n,0}(u,z) \right\},
\label{eq10}
\end{equation}
and 
\begin{equation}
W_{\pm 1}(u,\zeta) =\frac{1}{\zeta ^{1/4}}\exp \left\{ u\xi
+\sum\limits_{s=1}^{n-1}\frac{\hat{E}_{s}(z) -\hat{E}
_{s}\left( {z^{(\pm 1) }}\right) }{u^{s}}\right\} \left\{ {
1+\eta _{n,\pm 1}(u,z) }\right\}, \label{eq11}
\end{equation}
where the root in (\ref{eq10}) is such $\mathrm{Re}(u \xi)>0$ in $T_{0}$ and $\mathrm{Re}(u \xi)<0$ in $T_{-1}\cup T_{1}$; the branch in (\ref{eq11}) for $W_{j}(u,\zeta) $ ($j=\pm 1$) is such $\mathrm{Re}(u \xi)<0$ in $T_{j}$ and $\mathrm{Re}(u \xi)>0$ in $T_{0}\cup T_{-j}$. Then each solution is independent of $n$, and for $z\in Z_{j}\left( {u,}z^{(j)}\right) $ ($j=0,\pm 1$) 
\begin{equation}
\left\vert \eta _{n,j}(u,z) \right\vert \leq |u| ^{-n}\omega _{n,j}(u,z) \exp \left\{ {\left\vert
u\right\vert ^{-1}\varpi _{n,j}(u,z) +|u|
^{-n}\omega _{n,j}(u,z) }\right\}, 
\label{eq12}
\end{equation}
where 
\begin{multline}
\omega _{n,j}(u,z) =2\int_{z^{(j)}}^{z}{\left\vert {
\hat{F}_{n}(t) f^{1/2}(t) dt}\right\vert } \\ 
+\sum\limits_{s=1}^{n-1}\dfrac{1}{{|u| ^{s}}}{
\int_{z^{(j)}}^{z}{\left\vert {\sum\limits_{k=s}^{n-1}{\hat{F}
_{k}(t) \hat{F}_{s+n-k-1}(t) }f^{1/2}\left(
t\right) dt}\right\vert }},
\label{eq13}
\end{multline}
and 
\begin{equation}
\varpi _{n,j}(u,z) =4\sum\limits_{s=0}^{n-2}\frac{1}{{
|u| ^{s}}}{\int_{z^{(j)}}^{z}{\left\vert {\hat{
{F}}_{s+1}(t) f^{1/2}(t) dt}\right\vert }}.
\label{eq14}
\end{equation}
Here the paths of integration are taken along $\hat{\mathcal{L}}_{j}$.
\end{theorem}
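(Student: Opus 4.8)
The plan is to reduce everything to the main Liouville--Green (LG) theorem of \cite{Dunster:2020:LGE} by a change of variable, and then translate the resulting expansions and bounds back into the $z$ variable. First I would carry out the Liouville transformation that removes the turning-point behaviour of (\ref{eq3}): setting $V=\zeta^{1/4}W$ and taking $\xi$ (given by (\ref{eq2})) as the independent variable, the relation $d\xi=f^{1/2}\,dz$ turns (\ref{eq3}) into the standard form
\[
\frac{d^{2}V}{d\xi^{2}}=\left\{u^{2}+\Phi(z)\right\}V,
\]
where the apparent singularity $\tfrac{5}{16}\zeta^{-2}$ in $\psi(\zeta)$ is exactly cancelled by the Schwarzian contribution of the Liouville transformation, leaving the clean slowly varying coefficient $\Phi(z)$ of (\ref{eq5}). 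This is precisely the form to which \cite{Dunster:2020:LGE} applies, with large-parameter coefficient $u^{2}$ and perturbation $\Phi$.

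Next I would identify the LG coefficients. In the $\xi$ variable Dunster's exponential-form expansion $V=\exp\{\int p\,d\xi\}$ is generated by the Riccati equation $p'+p^{2}=u^{2}+\Phi$; expanding $p=\pm u+\sum_{s\ge1}p_{s}u^{-s}$ gives $p_{1}=\tfrac12\Phi$ and the recursion $p_{s+1}=-\tfrac12\,dp_{s}/d\xi-\tfrac12\sum_{j}p_{j}p_{s-j}$. Because $d/d\xi=f^{-1/2}\,d/dz$, this is identical to (\ref{eq8})--(\ref{eq9}), so the $\hat F_{s}(z)$ defined there are exactly these coefficients expressed through $z$, and $E_{s}=\int p_{s}\,d\xi=\int\hat F_{s}(z)f^{1/2}(z)\,dz=\hat E_{s}(z)$, matching (\ref{eq7}). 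The exponent then reads $\pm u\xi+\sum_{s=1}^{n-1}(\pm1)^{s}u^{-s}\{\hat E_{s}(z)-\hat E_{s}(z^{(j)})\}$, where the alternating sign for $W_{0}$ arises because replacing $u\mapsto-u$ in the dominant solution sends $E_{s}\mapsto(-1)^{s}E_{s}$; restoring $W=\zeta^{-1/4}V$ reproduces the prefactor $\zeta^{-1/4}$ and the exponentials of (\ref{eq10})--(\ref{eq11}).

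The three solutions and their branch prescriptions would come from choosing the base points $z^{(j)}$ (equivalently the rays $\xi^{(j)}=\infty\exp\{-i\theta+ij\pi\}$), one in each sector $T_{j}$, so that $(-1)^{j}u\xi$ is nonincreasing along the progressive path $\hat{\mathcal L}_{j}$; this fixes the sign of the exponent and the branch of $\zeta^{1/4}$ stated in the theorem, and the recessiveness of each solution in its sector gives its independence of the truncation index $n$. Finally, the error estimate (\ref{eq12}) is the exponentiated bound of \cite{Dunster:2020:LGE} verbatim, and the control functions $\omega_{n,j}$ and $\varpi_{n,j}$ are obtained from their $\xi$-variable counterparts simply by substituting $d\xi=f^{1/2}(t)\,dt$ under the integral signs, yielding (\ref{eq13}) and (\ref{eq14}) taken along $\hat{\mathcal L}_{j}$.

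I expect the main obstacle to be the bookkeeping in the coefficient identification: one must verify that the transformation genuinely produces $\Phi$ (including the cancellation of the $\tfrac{5}{16}\zeta^{-2}$ term) and that the $\xi$-recursion of \cite{Dunster:2020:LGE} maps term by term onto (\ref{eq9}) under $d/d\xi=f^{-1/2}\,d/dz$, together with checking that the integration constants in (\ref{eq7}) can be chosen to keep $(z-z_{0})^{1/2}\hat E_{2s+1}$ meromorphic so that the exponents are single-valued on $Z_{j}$. The remaining analytic content---existence of the progressive paths, convergence of the successive-approximation scheme, and the precise form of the exponentiated bound---is supplied directly by \cite{Dunster:2020:LGE} and need not be reproved here.
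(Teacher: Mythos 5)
Your proposal is correct and follows essentially the same route as the paper: both apply the Liouville transformation $Y=\zeta^{1/4}W$ with $\xi$ as independent variable to reach $d^{2}Y/d\xi^{2}=\left\{u^{2}+\Phi(z)\right\}Y$, invoke Theorem 1.1 of \cite{Dunster:2020:LGE} (its (1.16) and (1.17)) for the three expansions and the exponentiated error bound, translate the control functions via $d\xi=f^{1/2}(t)\,dt$ to get (\ref{eq13})--(\ref{eq14}), and obtain independence of $n$ from the normalization at $z^{(j)}$, where $\eta_{n,j}(u,z)$ vanishes. The Riccati-recursion bookkeeping you flag as the main obstacle is precisely the content the paper delegates to the cited reference, so no gap remains.
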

\begin{proof}
From the definition (\ref{eq2}) of $\xi$ and letting $Y(u,\xi) =\zeta ^{1/4}W(u,\zeta) $ we transform
(\ref{eq3}) to 
\begin{equation}
d^{2}Y/d{\xi }^{2}=\left\{ {u^{2}+}\Phi (z) \right\} Y,
\label{LGeqn}
\end{equation}
where $\Phi (z) $ is given by (\ref{eq5}). Then we apply \cite[Theorem 1.1]{Dunster:2020:LGE}, in particular (1.17) yields $W_{0}\left( {
u,\zeta }\right) $, and (1.16) yields $W_{\pm 1}(u,\zeta) $
(with different branches of $\xi$ in the $z$ plane, as described above).
The constants $\hat{E}_{s}\left( {z^{(0) }}\right) $ in (\ref{eq10}) were chosen so that
\begin{equation}
\lim_{z\rightarrow {z^{(0) }}}\zeta ^{1/4}e^{u{\xi }}W_{0}(u,\zeta) =1,
\label{eq14a}
\end{equation}
since from (\ref{eq12}) - (\ref{eq14}) $\lim_{z\rightarrow {z^{(0) }}}{\eta _{n,0}(u,z) =0}$, and hence $W_{0}(u,\zeta) $ is independent of $n$. Similarly for the constants $\hat{{E
}}_{s}\left( {z^{(\pm 1) }}\right) $ in (\ref{eq11}) and the resulting independence of $n$ for $W_{\pm 1}(u,\zeta) $.
\end{proof}
\begin{remark}Note all three solutions are analytic and hence single-valued near $\zeta =0$ even though $\xi$ and the coefficients $\hat{E}_{s}(z) $ are not.
\end{remark}

\subsection{Connection coefficients}
We now obtain a connection formula relating the three solutions $W_{j}(u,\zeta) $ ($j=0,\pm 1$). For this, and also throughout this paper, we assume the following.

\begin{hypothesis}
Let each $z^{(j) }\in T_{j}\cap Z_{j}$ ($j=0,\pm 1$) either be at infinity with (\ref{fginfinity})\ holding, or an
admissible pole. Furthermore, assume $z^{(0) }\in Z_{1}\cap Z_{-1}$ and $z^{( \pm 1) }\in Z_{0}\cap Z_{\mp 1}$, i.e. for $j,k=0,\pm 1$ there is a path consisting of a finite chain of $R_{2}$ arcs, linking $z^{(j)}$ with $z^{(k)}$ in $Z$ such as $z$ passes along the path from $z^{(j) }$ to $z^{(k) }$, the real part of $u\xi $ is monotonic.
\label{hyp}
\end{hypothesis}
\begin{lemma}
Under \Cref{hyp} 
\begin{equation}
\lambda _{-1}W_{-1}(u,\zeta) =iW_{0}(u,\zeta)
+\lambda _{1}W_{1}(u,\zeta),  
\label{eq15}
\end{equation}
where (with $\lambda _{0}:=1$) 
\begin{equation}
\lambda _{j}\exp \left\{ -\sum\limits_{s=1}^{n-1}\frac{\hat{E}_{s}\left( 
{z^{(j) }}\right) }{u^{s}}\right\} =\mu _{n}(u)
\left\{ {1+\delta }_{n,j}(u) \right\}  \label{lambda} \: (j=0,\pm 1),
\end{equation}
in which
\begin{equation}
\mu _{n}(u) =\exp \left\{ {-\sum\limits_{s=1}^{n-1}{
(-1)^{s}\frac{\hat{E}_{s}\left( {z^{(0) }}\right) }{u^{s}}}
}\right\},
\label{eq46}
\end{equation}
\begin{equation}
\delta_{n,\pm 1}(u) =\dfrac{\eta _{n,0}
\left( u,z^{( \mp 1) }\right) -\eta _{n,\pm 1}\left({u,z^{( \mp 1) }}\right) }{1+\eta _{n,\pm 1}\left( {u,z^{( \mp 1) }}\right) },
\label{delta}
\end{equation}
and $\delta_{n,0}(u)=0$.
\end{lemma}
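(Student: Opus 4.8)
The plan is to use that $W_{-1}$, $W_0$, and $W_1$ are three solutions of the second-order linear equation (\ref{eq3}), whose solution space is two-dimensional; this alone forces a linear relation, which I will write as $\lambda_{-1}W_{-1}=c_0W_0+\lambda_1W_1$, with coefficients depending on $u$ only. Everything then reduces to showing $c_0=i$ and that $\lambda_{\pm1}$ are the quantities in (\ref{lambda})-(\ref{delta}). The inputs I would assemble are the explicit forms (\ref{eq10})-(\ref{eq11}), the vanishing of $\eta_{n,j}(u,z)$ as $z\to z^{(j)}$ already used in (\ref{eq14a}), and the recessive/dominant branch data stated after (\ref{eq11}). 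Crucially, \Cref{hyp} places each $z^{(j)}$ at infinity or at an admissible pole and supplies progressive $R_2$-arc paths between the reference points, so that along these paths the real part of $u\xi$ is monotonic and the two exponentials $e^{\pm u\xi}$ separate cleanly in the limit toward any $z^{(j)}$; this is what makes the connection constants extractable from the asymptotic forms.

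To pin down the coefficients I would evaluate the relation in the limits $z\to z^{(1)}$ and $z\to z^{(-1)}$. At $z^{(1)}$ the solution $W_1$ is recessive while $W_0$ and $W_{-1}$ are dominant (with matching dominant exponential, since the $W_0$- and $W_{-1}$-branches of $\xi$ there are negatives of one another), so the dominant balance yields one equation linking $c_0$ and $\lambda_{-1}$; at $z^{(-1)}$ the roles of $W_1$ and $W_{-1}$ interchange, giving the companion equation for $\lambda_1$. The error factors $1+\eta_{n,0}$ and $1+\eta_{n,\pm1}$ that survive are those at the opposite reference point, and their quotient is exactly $1+\delta_{n,\pm1}=(1+\eta_{n,0}(u,z^{(\mp1)}))/(1+\eta_{n,\pm1}(u,z^{(\mp1)}))$ of (\ref{delta}); the exponential normalizations collected from the two ends assemble into the common factor $\mu_n(u)$ of (\ref{eq46}) times $\exp\{\sum_s\hat{E}_s(z^{(\pm1)})/u^s\}$, reproducing (\ref{lambda}). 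The $(-1)^s$ weighting retained by $\mu_n$ is not an accident: it is the mark of the $u\to-u$ Liouville-Green duality between the two directions of recessiveness, which is precisely the difference between the exponent of $W_0$ in (\ref{eq10}) and that of $W_{\pm1}$ in (\ref{eq11}). As an independent check, and to isolate $c_0$, I would compute the three constant Wronskians $\mathcal{W}\{W_j,W_k\}$ (constant because (\ref{eq3}) has no first-derivative term) from the leading forms using $d\xi/d\zeta=\zeta^{1/2}$; pairing a recessive against a dominant solution gives $2u$ times exponential factors, and the ratios $\mathcal{W}\{W_{-1},W_1\}/\mathcal{W}\{W_0,W_1\}$ and $\mathcal{W}\{W_{-1},W_0\}/\mathcal{W}\{W_1,W_0\}$ return $c_0$ and $\lambda_1/\lambda_{-1}$.

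The purely numerical value $c_0=i$ is the step I expect to be the main obstacle, since it is a branch-and-monodromy accounting rather than a computation. Each $W_j$ carries the branch of $\xi$ and $\zeta^{1/4}$ adapted to its own sector $T_j$, and the three recessive sectors tile a full neighborhood of the turning point; comparing the solutions on a common branch therefore forces a continuation once around $\zeta=0$, under which the prefactor $\zeta^{1/4}$ picks up its quarter-power monodromy $e^{i\pi/2}=i$. The task is to show that, after the dominant/recessive exponentials and the coefficient factors have been absorbed into $\mu_n$ and $\lambda_{\pm1}$ as above, the only surviving constant is exactly this monodromy phase $i$; this is the same connection constant that underlies the comparison equation $d^2W/d\zeta^2=u^2\zeta W$, so the value can be cross-checked against the Airy relation $\mathrm{Ai}_0+e^{2\pi i/3}\mathrm{Ai}_{-1}+e^{-2\pi i/3}\mathrm{Ai}_1=0$.

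Finally, with $c_0=i$ and the overall scale normalized by $\lambda_0:=1$, consistent with $\delta_{n,0}=0$, I would read off $\lambda_{\pm1}$ and verify they coincide with (\ref{lambda}). The one remaining technical care is that ``values at $z^{(j)}$'' must be interpreted as normalizing limits of the type (\ref{eq14a}) whenever the reference point lies at infinity or at an admissible pole, and that each continuation and evaluation is carried out along the progressive paths furnished by \Cref{hyp}.
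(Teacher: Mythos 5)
Your proposal is correct and takes essentially the same route as the paper: there too, $\lambda_{\pm 1}$ are fixed by letting $z\to z^{(\mp 1)}$ in (\ref{eq15}), where the recessive solution vanishes, the dominant exponentials match across the branch cut ($\xi\to -\xi^{(\mp 1)}$, with the odd coefficients $\hat{E}_{2s+1}$ flipping sign by single-valuedness of $(z-z_0)^{1/2}\hat{E}_{2s+1}$), and the constant $i$ is precisely the quarter-power monodromy you identify ($\zeta^{-1/4}\to -i\{\zeta^{(-1)}\}^{-1/4}$ in the paper), after which the quotient of the surviving error factors yields (\ref{delta}) and the normalizations assemble into (\ref{lambda})--(\ref{eq46}). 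Your Wronskian and Airy-function cross-checks are supplementary to the paper's argument but consistent with it (the Airy identity you quote is equivalent to (\ref{eq36a})).
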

\begin{remark} 
From (\ref{eq12}) and (\ref{delta}) we note that ${\delta}_{n,j}(u) =\mathcal{O}\left(u^{-n}\right) $, and hence from (\ref{lambda}) and (\ref{eq46}) 
\begin{equation}
\lambda _{j}\exp \left\{ -\sum\limits_{s=1}^{n-1}\frac{\hat{E}_{s}\left( 
z^{( j) }\right) }{u^{s}}\right\} =\lambda _{k}\exp \left\{ -\sum\limits_{s=1}^{n-1}\frac{\hat{E}_{s}\left( {z^{(k)}}
\right) }{u^{s}}\right\} \left\{ 1+\mathcal{O}\left( \frac{1}{{u^{n}}}
\right) \right\},
\label{eq22}
\end{equation}
for $j,k\in \left\{ 0,1,-1\right\} $.
\end{remark}
\begin{proof}
The result is trivial for $j=0$ since by definition $\lambda _{0}=1$ and ${\delta }_{n,0}(u) =0$. For $j=-1$ let $z\rightarrow z^{(-1)}$ in (\ref{eq15}) (correspondingly $\xi \rightarrow \xi ^{(-1) }$ and $\zeta \rightarrow \zeta ^{(-1) }$). For $W_{0}(u,\zeta) $ and $W_{-1}(u,\zeta) $ we can use (\ref{eq10}) and (\ref{eq11}), and the latter function vanishes
exponentially in the limit. For $W_{1}(u,\zeta) $ we cross a branch cut as $z\rightarrow z^{(-1) }$, and as such in (\ref{eq11}) we have $\xi \rightarrow -\xi ^{(-1) }$, so that $\mathrm{Re}(u \xi) \rightarrow +\infty $. Thus $W_{1}(u,\zeta) $, like $W_{0}(u,\zeta) $, is exponentially large in this limit. As
remarked earlier, $\hat{E}_{2s}(z) $ and $\left(z-z_{0}\right) ^{1/2}\hat{E}_{2s+1}(z) $ are meromorphic in $Z$, and hence single-valued, since they are analytic in that domain except for a pole at the turning point $z=z_{0}$. Thus we have for the coefficients in (\ref{eq11}) for $W_{1}(u,\zeta) $ that $\hat{E}_{2s}(z) \rightarrow \hat{E}_{2s}\left( z^{(-1) }\right) $ and$
\ \hat{E}_{2s+1}(z) \rightarrow -\hat{E}_{2s+1}\left(
z^{(-1) }\right) $ as $z\rightarrow z^{(-1) }$, and in addition $\zeta ^{-1/4}\rightarrow -i\left\{ \zeta ^{(-1)}\right\}^{-1/4}$. We then have from (\ref{eq15}) 
\begin{equation}
\underset{z\rightarrow z^{(-1) }}{\lim }\left\{ \lambda_{1}W_{1}(u,\zeta) +iW_{0}(u,\zeta) \right\} =0,
\label{eq18}
\end{equation}
and hence 
\begin{multline}
\lambda _{1}\exp \left\{ -\sum\limits_{s=1}^{n-1}\dfrac{\hat{E}
_{s}\left( {z^{(1) }}\right) }{u^{s}}\right\} \left\{ {1+\eta_{n,1}\left( {u,z^{(-1) }}\right) }\right\} \\ 
-\exp \left\{ -\sum\limits_{s=1}^{n-1}(-1) ^{s}\dfrac{\hat{E}_{s}\left( {z^{(0) }}\right) }{u^{s}}\right\} \left\{ {1+\eta _{n,0}\left( {u,z^{(-1) }}\right) }\right\} =0.
\label{eq19}
\end{multline}

Similarly letting $z\rightarrow z^{(1) }$ in (\ref{eq15}) yields 
\begin{multline}
\lambda _{-1}\exp \left\{ -\sum\limits_{s=1}^{n-1}\dfrac{\hat{E}
_{s}\left( {z^{(-1) }}\right) }{u^{s}}\right\} \left\{ {
1+\eta _{n,-1}\left( {u,z^{(1)}}\right) }\right\} \\ 
-\exp \left\{ -\sum\limits_{s=1}^{n-1}{(-1) ^{s}\dfrac{\hat{{E
}}_{s}\left( {z^{(0) }}\right) }{u^{s}}}\right\} \left\{ {
1+\eta _{n,0}\left( {u,z^{(1) }}\right) }\right\} =0.
\label{eq21}
\end{multline}
Then (\ref{lambda}) follows from (\ref{eq46}), (\ref{delta}), (\ref{eq19}) and (\ref{eq21}).
\end{proof}

\begin{remark}
The change in integration constants does not affect the
error bounds. Thus for $\eta _{n,j}(u,\xi) $ we can still use (\ref{eq12}).
\end{remark}
\subsection{Airy functions}
We complete this section by presenting similar LG expansions, complete with error bounds, for the Airy functions appearing in (\ref{wjs}). The proof is given in \cref{secA}. We note that the regions of validity of the following asymptotic expansions are not maximal, but they suffice for our purposes.

\begin{theorem}\label{thm:10}
Let $\left\vert {\arg \left( {u^{2/3}\zeta }\right) }\right\vert
\leq {\frac{2}{3}}\pi $ (or equivalently, from (\ref{eq2}), $\left\vert \arg(u \xi) \right\vert \leq \pi $). Then 
\begin{equation}
\mathrm{Ai}\left( {u^{2/3}\zeta }\right) =\frac{1}{2\pi ^{1/2}u^{1/6}\zeta
^{1/4}}\exp \left\{ -u\xi +\sum\limits_{s=1}^{n-1}{(-1) ^{s}
\frac{a_{s}}{su^{s}\xi ^{s}}}\right\} \left\{ {1+\eta _{n}^{(0)
}(u,\xi) }\right\},
\label{eq24}
\end{equation}
and 
\begin{equation}
\mathrm{Ai}^{\prime }\left( {u^{2/3}\zeta }\right) =-\frac{u^{1/6}\zeta ^{1/4}
}{2\pi ^{1/2}}\exp \left\{-u\xi +\sum\limits_{s=1}^{n-1}{(-1)^{s}\frac{\tilde{a}_{s}}{su^{s}\xi ^{s}}}\right\} \left\{ {1+
\tilde{{\eta }}_{n}^{(0) }(u,\xi) }\right\},
\label{eq25}
\end{equation}
where 
\begin{equation}
\left\vert {\eta _{n}^{(0) }(u,\xi) }\right\vert
\leq |u| ^{-n}\gamma _{n}(u,\xi) \exp
\left\{ {|u| ^{-1}\beta _{n}(u,\xi)
+|u| ^{-n}\gamma _{n}(u,\xi) }\right\},
\label{eq26}
\end{equation}
and 
\begin{equation}
\left\vert {\tilde{{\eta }}_{n}^{(0) }(u,\xi) }
\right\vert \leq |u| ^{-n}\tilde{{\gamma }}_{n}(u,\xi) \exp \left\{ |u| ^{-1}\tilde{{\beta }}
_{n}(u,\xi) +|u| ^{-n}\tilde{{\gamma }}
_{n}(u,\xi) \right\},
\label{eq27}
\end{equation}
where 
\begin{equation}
\gamma _{n}(u,\xi) =\frac{2a_{n}\Lambda _{n+1}}{|\xi| ^{n}}+\frac{1}{|u| \left\vert \xi
\right\vert ^{n+1}}\sum\limits_{s=0}^{n-2}\frac{\Lambda _{n+s+2}}{|u \xi| ^{s}}\sum\limits_{k=s+1}^{n-1}{a_{k}a_{s+n-k}},
\label{eq28}
\end{equation}
\begin{equation}
{\beta }_{n}(u,\xi) =\frac{4}{|\xi| }
\sum\limits_{s=0}^{n-2}{\frac{a_{s+1}\Lambda _{s+2}}{\left\vert {u\xi }
\right\vert ^{s}}},
\label{eq29}
\end{equation}
\begin{equation}
\tilde{\gamma}_{n}(u,\xi) =\frac{2\left\vert \tilde{a}
_{n}\right\vert \Lambda _{n+1}}{|\xi| ^{n}}+\frac{1}{
|u| |\xi| ^{n+1}}
\sum\limits_{s=0}^{n-2}{\frac{\Lambda _{n+s+2}}{
|u \xi| ^{s}}\sum\limits_{k=s+1}^{n-1}\tilde{a}{_{k}\tilde{a}_{s+n-k}}},
\label{eq28a}
\end{equation}
\begin{equation}
{\tilde{\beta}}_{n}(u,\xi) =\frac{4}{\left\vert \xi
\right\vert }\sum\limits_{s=0}^{n-2}{\frac{\left\vert \tilde{a}
_{s+1}\right\vert \Lambda _{s+2}}{|u \xi| ^{s}}},
\label{eq29a}
\end{equation}
and
\begin{equation}
\Lambda _{n}=\dfrac{\pi ^{1/2}\Gamma \left( \frac{1}{2}n-\frac{1}{2}\right) 
}{2\Gamma \left( \frac{1}{2}n\right) },
\label{eq93}
\end{equation}

\end{theorem}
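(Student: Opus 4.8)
The plan is to recognize each Airy approximant as a recessive Liouville--Green solution of an explicitly computable equation of the form~(\ref{eq3}), and then to read off both the expansion and its error bound from \Cref{thm:2.1} (equivalently from \cite[Theorem 1.1]{Dunster:2020:LGE}). Since $\mathrm{Ai}(u^{2/3}\zeta)$ satisfies $d^{2}W/d\zeta^{2}=u^{2}\zeta W$ exactly, this is~(\ref{eq3}) with $\psi\equiv0$; comparing with~(\ref{eq4}) forces $\zeta\Phi=-\tfrac{5}{16}\zeta^{-2}$, and using $\zeta^{3}=\tfrac94\xi^{2}$ from~(\ref{eq2}) gives the associated equation $d^{2}Y/d\xi^{2}=\{u^{2}-\tfrac{5}{36}\xi^{-2}\}Y$ for $Y=\zeta^{1/4}W$. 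For the derivative, $V=\mathrm{Ai}'(u^{2/3}\zeta)$ solves $V''-\zeta^{-1}V'-u^{2}\zeta V=0$; removing the first-order term via $V=\zeta^{1/2}P$ yields $d^{2}P/d\zeta^{2}=\{u^{2}\zeta+\tfrac34\zeta^{-2}\}P$, i.e.\ (\ref{eq3}) with $\psi=\tfrac34\zeta^{-2}$, whence $\Phi=\tfrac{7}{36}\xi^{-2}$ for the corresponding $Y=\zeta^{1/4}P=\zeta^{-1/4}V$.

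With $\Phi=\kappa\xi^{-2}$ (where $\kappa=-\tfrac{5}{36}$ for $\mathrm{Ai}$ and $\kappa=\tfrac{7}{36}$ for $\mathrm{Ai}'$) the coefficient machinery collapses. Writing the recursion~(\ref{eq8})--(\ref{eq9}) in the $\xi$ variable (using $f^{-1/2}d/dz=d/d\xi$) and substituting the ansatz $\hat{F}_{s}=c_{s}\xi^{-(s+1)}$, I would show that the constants satisfy $c_{s+1}=\tfrac12(s+1)c_{s}-\tfrac12\sum_{j=1}^{s-1}c_{j}c_{s-j}$ with $c_{1}=\tfrac12\kappa$; setting $a_{s}=-c_{s}$ turns this into exactly~(\ref{arec}), with $a_{1}=-\tfrac12\kappa$ producing $\tfrac{5}{72}$ and $-\tfrac{7}{72}$ respectively, matching the prescribed initial data. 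Integrating, $\hat{E}_{s}=\int\hat{F}_{s}\,d\xi=a_{s}s^{-1}\xi^{-s}$ (the constant fixed to vanish at $\xi=\infty$), so the exponents in the recessive template~(\ref{eq10}) reproduce precisely those in~(\ref{eq24}) and~(\ref{eq25}). The multiplicative constants out front, left free by the normalisation in \Cref{thm:2.1}, are then pinned down by matching to the classical leading behaviour $\mathrm{Ai}(x)\sim\tfrac12\pi^{-1/2}x^{-1/4}e^{-\frac23x^{3/2}}$ and $\mathrm{Ai}'(x)\sim-\tfrac12\pi^{-1/2}x^{1/4}e^{-\frac23x^{3/2}}$ as $\xi\to\infty$, which yield the stated prefactors $\tfrac12\pi^{-1/2}u^{-1/6}\zeta^{-1/4}$ and $-\tfrac12\pi^{-1/2}u^{1/6}\zeta^{1/4}$.

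For the bounds I would specialise~(\ref{eq13})--(\ref{eq14}) to these inverse-power coefficients. Since $f^{1/2}\,dt=d\xi$ and $\hat{F}_{s}=c_{s}\xi^{-(s+1)}$, every integrand is a pure power of $|t|$, and the entire computation reduces to the single majorant integral $\int_{\mathrm{path}}|t|^{-m}\,|dt|$ taken from $\xi$ out to the reference point at infinity. Evaluating this along the appropriate progressive path gives $\Lambda_{m}|\xi|^{1-m}$, where $\Lambda_{m}=\int_{0}^{\infty}(1+t^{2})^{-m/2}\,dt=\tfrac12\pi^{1/2}\Gamma(\tfrac12 m-\tfrac12)/\Gamma(\tfrac12 m)$, which is~(\ref{eq93}). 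Substituting $m=n+1$ in the first term of~(\ref{eq13}), and the shifted powers ($m=s+n+2$ in the quadratic sum of~(\ref{eq13}) and $m=s+2$ in~(\ref{eq14})), together with $\hat{F}_{k}\hat{F}_{l}\propto a_{k}a_{l}$ (resp.\ $\tilde{a}_{k}\tilde{a}_{l}$), reproduces $\gamma_{n},\tilde{\gamma}_{n}$ in~(\ref{eq28}),(\ref{eq28a}) and $\beta_{n},\tilde{\beta}_{n}$ in~(\ref{eq29}),(\ref{eq29a}) term by term; the bounds~(\ref{eq26}),(\ref{eq27}) are then just~(\ref{eq12}) restated.

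The \emph{main obstacle} is this last step: justifying the closed-form value $\int_{\mathrm{path}}|t|^{-m}\,|dt|=\Lambda_{m}|\xi|^{1-m}$ uniformly over the stated sector. The identity is transparent on the ray $\arg(u\xi)=\tfrac12\pi$, where the progressive steepest-descent path is horizontal and the substitution $s=|\xi|\tau$ returns the beta integral exactly, and it is favourable throughout $|\arg(u\xi)|\le\tfrac12\pi$; but as $\arg(u\xi)$ increases towards $\pi$ the progressive path $\hat{\mathcal{L}}_{j}$ must be bent away from the origin, and controlling $\int|t|^{-m}\,|dt|$ by $\Lambda_{m}|\xi|^{1-m}$ there is the delicate geometric point. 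This is exactly why the region in the statement is declared non-maximal, and it is where the care in selecting $\hat{\mathcal{L}}_{j}$ is concentrated. Everything else is bookkeeping driven by the explicit form $\hat{F}_{s}=O(\xi^{-(s+1)})$.
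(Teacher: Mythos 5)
Your overall route coincides with the paper's own proof (its Appendix A): recast $\mathrm{Ai}(u^{2/3}\zeta)$ and $\zeta^{-1/4}\mathrm{Ai}'(u^{2/3}\zeta)$ as recessive solutions of the Liouville--Green equations $d^{2}Y/d\xi^{2}=\{u^{2}-\tfrac{5}{36}\xi^{-2}\}Y$ and $d^{2}Y/d\xi^{2}=\{u^{2}+\tfrac{7}{36}\xi^{-2}\}Y$, observe that the coefficient recursion collapses to $\hat{F}_{s}=-a_{s}\xi^{-s-1}$, hence $\hat{E}_{s}=a_{s}s^{-1}\xi^{-s}$ with (\ref{arec}) and the stated initial data (likewise $\tilde{a}_{s}$), fix the prefactors by the recessive asymptotics (\ref{Aiinfinity}) and (\ref{eq78a}) together with uniqueness of recessive solutions, and then specialize the error bound of \cite[Theorem~1.1]{Dunster:2020:LGE} to these pure inverse-power coefficients. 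All of that matches, and your algebra for $\kappa$, $c_{s}$, and the exponents is correct.

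The problem is the step you yourself flag as the ``main obstacle'': you leave unproved the estimate $\int_{\mathrm{path}}|t|^{-p}\,|dt|\leq\Lambda_{p}|\xi|^{1-p}$ on the full sector, and your guess at how it is handled is wrong in an instructive way. No bending of progressive paths away from the origin occurs, and nothing becomes geometrically delicate as $\arg(u\xi)\to\pi$. The paper uses exactly two explicit straight paths: for $\mathrm{Re}(u\xi)\geq 0$ the radial ray $t=\tau\xi$, $1\leq\tau<\infty$, giving $\int_{\infty}^{\xi}|t^{-p}\,dt|=\{(p-1)|\xi|^{p-1}\}^{-1}$; and for $\mathrm{Re}(u\xi)<0$ with $\pm\,\mathrm{Im}(u\xi)\geq 0$ the ray $t=\xi\mp i\xi\tau$, $0\leq\tau<\infty$, along which $\mathrm{Re}(ut)=\mathrm{Re}(u\xi)\pm\tau\,\mathrm{Im}(u\xi)$ is nondecreasing all the way to $|\arg(u\xi)|=\pi$, while $|t|=|\xi|(1+\tau^{2})^{1/2}$ and $|dt|=|\xi|\,d\tau$ give the \emph{exact} value $\Lambda_{p}|\xi|^{1-p}$ with $\Lambda_{p}=\int_{0}^{\infty}(1+\tau^{2})^{-p/2}\,d\tau$, i.e.\ (\ref{eq93}). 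So the closed form is exact precisely in the half-plane you feared, and is merely an upper bound in the easy half-plane. What remains --- and what your sketch silently assumes when it quotes the single constant $\Lambda_{m}$ for all $\arg(u\xi)$ --- is the comparison $1/(p-1)<\Lambda_{p}$ for $p\geq 2$, which the paper proves by writing the ratio as $\pi^{-1}B\left(\tfrac{1}{2}p,\tfrac{1}{2}\right)$ and invoking monotonic decrease of the Beta function in its first argument. Without these two ingredients (the explicit second path with its exact beta-integral evaluation, and the lemma unifying both cases under the single $\Lambda_{p}$ of (\ref{eq93})) the bounds (\ref{eq26})--(\ref{eq29a}) are not established; with them, the rest of your argument is, as you say, bookkeeping.
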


On replacing $\zeta$ by $\zeta e^{\mp 2\pi i/3}$ we have the following, assuming the same branches for $\xi(z)$ as in \Cref{thm:2.1}.

\begin{corollary}\label{cor:A}
For $\left\vert {\arg \left( {u^{2/3}\zeta e^{\mp 2\pi i/3}}
\right) }\right\vert \leq {\frac{2}{3}}\pi $ 
\begin{equation}
\mathrm{Ai}_{\pm 1}\left( {u^{2/3}\zeta }\right) =\frac{e^{\pm \pi i/6}}{2\pi
^{1/2}u^{1/6}\zeta ^{1/4}}\exp \left\{ u\xi +\sum\limits_{s=1}^{n-1}\frac{
a_{s}}{su^{s}\xi ^{s}}\right\} \left\{ {1+\eta _{n}^{(\pm 1)
}(u,\xi) }\right\},
\label{eq30}
\end{equation}
and 
\begin{equation}
\mathrm{Ai}_{\pm 1}^{\prime }\left( {u^{2/3}\zeta }\right) =\frac{e^{\pm \pi
i/6}u^{1/6}\zeta ^{1/4}}{2\pi ^{1/2}}\exp \left\{ u\xi
+\sum\limits_{s=1}^{n-1}\frac{\tilde{a}_{s}}{su^{s}\xi ^{s}}\right\}
\left\{ {1+\tilde{\eta }_{n}^{(\pm 1) }(u,\xi) }\right\},
\label{eq31}
\end{equation}
where the error terms are given by $\eta _{n}^{(\pm 1) }(u,\xi) =\eta _{n}^{(0) }\left( {u,\xi e^{\mp \pi i}} \right) $ and $\tilde{{\eta }}_{n}^{(\pm 1) }(u,\xi) =\tilde{{\eta }}_{n}^{(0) }\left( u,\xi e^{\mp \pi i} \right) $, and satisfy the bounds (\ref{eq26}) and (\ref{eq27}),
respectively.
\end{corollary}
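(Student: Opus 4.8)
The plan is to obtain \Cref{cor:A} directly from \Cref{thm:10} by the analytic substitution $\zeta\mapsto\zeta e^{\mp 2\pi i/3}$, keeping careful track of the induced branches and phases. Writing $\zeta_{\ast}=\zeta e^{\mp 2\pi i/3}$ and $\xi_{\ast}=\tfrac23\zeta_{\ast}^{3/2}$ for the rotated variables, the first step is to record the elementary consequences of the branch conventions fixed in \Cref{thm:2.1}: from $\xi=\tfrac23\zeta^{3/2}$ one gets $\zeta_{\ast}^{1/4}=\zeta^{1/4}e^{\mp\pi i/6}$ and $\xi_{\ast}=\xi e^{\mp\pi i}=-\xi$, while the admissibility constraint $|\arg(u^{2/3}\zeta_{\ast})|\le\tfrac23\pi$ is precisely the hypothesis of the corollary and corresponds to $|\arg(u\xi_{\ast})|\le\pi$, so that \Cref{thm:10} applies verbatim at $\zeta_{\ast}$.

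Next I would substitute these relations into (\ref{eq24}) and (\ref{eq25}). By definition $\mathrm{Ai}_{\pm1}(u^{2/3}\zeta)=\mathrm{Ai}(u^{2/3}\zeta_{\ast})$, so (\ref{eq24}) yields (\ref{eq30}) once three simplifications are checked: the exponential $-u\xi_{\ast}=u\xi$ flips sign; the phase $\zeta_{\ast}^{-1/4}=\zeta^{-1/4}e^{\pm\pi i/6}$ supplies the prefactor $e^{\pm\pi i/6}$; and each summand obeys $(-1)^{s}\xi_{\ast}^{-s}=(-1)^{s}(-\xi)^{-s}=\xi^{-s}$, turning the alternating series of \Cref{thm:10} into the non-alternating one of \Cref{cor:A}. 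For the derivative one must remember that the prime denotes differentiation of the rotated function, so that $\mathrm{Ai}_{\pm1}'(u^{2/3}\zeta)=e^{\mp 2\pi i/3}\mathrm{Ai}'(u^{2/3}\zeta_{\ast})$; substituting (\ref{eq25}) and combining the chain-rule phase $e^{\mp 2\pi i/3}$ with the factor $-e^{\mp\pi i/6}$ arising from $\zeta_{\ast}^{1/4}$ produces exactly the coefficient $+e^{\pm\pi i/6}u^{1/6}\zeta^{1/4}/(2\pi^{1/2})$ appearing in (\ref{eq31}), the tilde-sums transforming as before.

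Finally I would dispose of the error terms. Setting $\eta_{n}^{(\pm1)}(u,\xi):=\eta_{n}^{(0)}(u,\xi_{\ast})=\eta_{n}^{(0)}(u,\xi e^{\mp\pi i})$, and likewise for the tilde quantities, reproduces the definitions in the statement, and the bounds (\ref{eq26}) and (\ref{eq27}) transfer without change: every quantity on the right of (\ref{eq28})--(\ref{eq29a}) depends on $\xi$ only through the moduli $|\xi|$ and $|u\xi|$, which are invariant under $\xi\mapsto\xi e^{\mp\pi i}$, whence $\gamma_{n}(u,\xi_{\ast})=\gamma_{n}(u,\xi)$, $\beta_{n}(u,\xi_{\ast})=\beta_{n}(u,\xi)$, and similarly for $\tilde\gamma_{n}$ and $\tilde\beta_{n}$.

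I expect the only genuine work to be the phase bookkeeping of the second paragraph: confirming that the derivative's chain-rule factor $e^{\mp 2\pi i/3}$ together with the quarter-power phase yields precisely $e^{\pm\pi i/6}$ (and not some other sixth root of unity), and that the single identity $\xi_{\ast}=-\xi$ simultaneously reverses the dominant/recessive exponential and cancels the alternating signs in the sums. Once these phases are verified, the remaining assertions, including the modulus-invariance of the bounds, are immediate.
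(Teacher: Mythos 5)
Your proposal is correct and takes essentially the same route as the paper, which obtains \Cref{cor:A} from \Cref{thm:10} precisely by the substitution $\zeta \mapsto \zeta e^{\mp 2\pi i/3}$ with the branches of \Cref{thm:2.1} retained, leaving the details to the reader. Your phase bookkeeping is the part the paper suppresses, and it checks out: $\zeta_{\ast}^{1/4}=\zeta^{1/4}e^{\mp\pi i/6}$ and $\xi_{\ast}=\xi e^{\mp\pi i}$ flip the exponential and cancel the alternating signs, the chain-rule factor $e^{\mp 2\pi i/3}$ in $\mathrm{Ai}_{\pm 1}^{\prime}$ (the convention consistent with the paper's use of (\ref{eq36a}) and (\ref{eq34})--(\ref{eq36})) combines with $-e^{\mp\pi i/6}$ to give exactly $e^{\pm\pi i/6}$, and the bounds (\ref{eq26})--(\ref{eq27}) transfer unchanged because (\ref{eq28})--(\ref{eq29a}) depend on $\xi$ only through $\lvert\xi\rvert$ and $\lvert u\xi\rvert$.
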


\section{Error bounds away from the turning point}
\label{sec3}

The main result is given by \Cref{thm:main1} below. In leading to this, we present some preliminary results.

We begin, following \cite{Dunster:2017:COA}, by defining $A(u,z) $ and $B(u,z) $ by 
\begin{equation}
\frac{1}{2\pi ^{1/2}u^{1/6}}W_{0}(u,\zeta) =\mathrm{Ai}
_{0}\left( {u^{2/3}\zeta }\right) A(u,z) +\mathrm{Ai}
_{0}^{\prime }\left( {u^{2/3}\zeta }\right) B(u,z),
\label{eq34}
\end{equation}
and 
\begin{equation}
\frac{e^{\pi i/6}\lambda _{1}}{2\pi ^{1/2}u^{1/6}}W_{1}
(u,\zeta)
=\mathrm{Ai}_{1}\left( {u^{2/3}\zeta }\right) A(u,z) +
\mathrm{Ai}_{1}^{\prime }\left( {u^{2/3}\zeta }\right) B(u,z).
\label{eq35}
\end{equation}
This leads to the following identity.

\begin{proposition} 
Under \Cref{hyp}
\begin{equation}
\frac{e^{-\pi i/6}\lambda _{-1}}{2\pi ^{1/2}u^{1/6}}W_{-1}
(u,\zeta) =\mathrm{Ai}_{-1}\left( {u^{2/3}\zeta }\right) A(u,z) +
\mathrm{Ai}_{-1}^{\prime }\left( {u^{2/3}\zeta }\right) B(u,z).
\label{eq36}
\end{equation}
\end{proposition}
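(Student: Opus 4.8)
The plan is to obtain (\ref{eq36}) directly from the connection relation (\ref{eq15}) together with the defining relations (\ref{eq34}) and (\ref{eq35}), so that the entire identity collapses onto a single connection formula for the three Airy solutions $\mathrm{Ai}_{j}(u^{2/3}\zeta)$. Because (\ref{eq15}) was established under \Cref{hyp}, the proposition will follow under the same hypothesis.

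First I would record the relevant Airy connection formula. The three functions $\mathrm{Ai}_{0}(u^{2/3}\zeta)$, $\mathrm{Ai}_{1}(u^{2/3}\zeta)$ and $\mathrm{Ai}_{-1}(u^{2/3}\zeta)$ are solutions of the comparison equation $d^{2}W/d\zeta^{2}=u^{2}\zeta W$, and the standard identity $\mathrm{Ai}_{0}+e^{2\pi i/3}\mathrm{Ai}_{-1}+e^{-2\pi i/3}\mathrm{Ai}_{1}=0$ rearranges to
\[
e^{\pi i/6}\mathrm{Ai}_{-1}(u^{2/3}\zeta)=i\,\mathrm{Ai}_{0}(u^{2/3}\zeta)+e^{-\pi i/6}\mathrm{Ai}_{1}(u^{2/3}\zeta).
\]
The structural point I would stress is that, since all three are solutions of one and the same second order linear equation, differentiating this relation with respect to $\zeta$ gives the identical linear relation for the companion terms,
\[
e^{\pi i/6}\mathrm{Ai}_{-1}'(u^{2/3}\zeta)=i\,\mathrm{Ai}_{0}'(u^{2/3}\zeta)+e^{-\pi i/6}\mathrm{Ai}_{1}'(u^{2/3}\zeta),
\]
with no change in the coefficients $i$ and $e^{-\pi i/6}$. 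This is exactly what forces the $A$-component and the $B$-component to transform in step.

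Next I would multiply (\ref{eq15}) by $e^{-\pi i/6}/(2\pi^{1/2}u^{1/6})$ and substitute (\ref{eq34}) and (\ref{eq35}) for $W_{0}$ and $\lambda_{1}W_{1}$, writing the right-hand side entirely in terms of $A$, $B$ and $\mathrm{Ai}_{0},\mathrm{Ai}_{0}',\mathrm{Ai}_{1},\mathrm{Ai}_{1}'$. Gathering the coefficient of $A$ and the coefficient of $B$ produces precisely the two combinations $i\mathrm{Ai}_{0}+e^{-\pi i/6}\mathrm{Ai}_{1}$ and $i\mathrm{Ai}_{0}'+e^{-\pi i/6}\mathrm{Ai}_{1}'$; the two displayed relations replace these by $e^{\pi i/6}\mathrm{Ai}_{-1}$ and $e^{\pi i/6}\mathrm{Ai}_{-1}'$, the leftover factors $e^{\pm\pi i/6}$ cancel, and (\ref{eq36}) results.

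I expect the only real obstacle to be the phase bookkeeping. One must keep the branches of $\xi$ and $\zeta^{1/4}$ fixed exactly as in \Cref{thm:2.1} and \Cref{cor:A}, so that the Airy connection formula genuinely presents itself with the coefficients $i$ and $e^{-\pi i/6}$ that mesh with the prefactors $e^{\pi i/6}\lambda_{1}$ and $e^{-\pi i/6}\lambda_{-1}$ built into (\ref{eq34})--(\ref{eq36}). Once the function-level relation is nailed down with these phases, the derivative relation comes for free from the comparison equation, so no separate Wronskian computation or ad hoc argument for the $B$-coefficient is required.
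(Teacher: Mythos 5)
Your proof is correct and takes essentially the same route as the paper: it too obtains (\ref{eq36}) by substituting (\ref{eq34}) and (\ref{eq35}) into the connection relation (\ref{eq15}) and invoking the Airy connection formula (\ref{eq36a}), whose differentiated form indeed carries the same coefficients $i$ and $e^{-\pi i/6}$ under the convention $\mathrm{Ai}_j^{\prime}(x)=\tfrac{d}{dx}\,\mathrm{Ai}_j(x)$ used throughout. Your phase bookkeeping checks out, so nothing further is needed.
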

\begin{proof}
This follows from (\ref{eq15}), (\ref{eq34}), (\ref{eq35})
and the Airy function connection formula (\cite[Chap. 11, Eq. (8.03)]
{Olver:1997:ASF}) 
\begin{equation}
i\mathrm{Ai}\left( {u^{2/3}\zeta }\right) +e^{-\pi i/6}\mathrm{Ai}_{1}\left( 
{u^{2/3}\zeta }\right) =e^{\pi i/6}\mathrm{Ai}_{-1}\left( {u^{2/3}\zeta }
\right).
\label{eq36a}
\end{equation}
\end{proof}

\begin{corollary} 
Let $z\in Z_{j}\cap Z_{k}$ ($j\neq k$). With $\lambda _{0}=1$ 
\begin{multline}
2A(u,z) =\lambda _{j}\exp \left\{ \sum\limits_{s=1}^{n-1}{
\dfrac{\tilde{\mathcal{E}}_{s}(z) -\hat{E}_{s}\left( {z^{(j) }}\right) }{u^{s}}}\right\} \left\{ 1+\eta _{n,j}
(u,z) \right\} \left\{ 1+\tilde{{\eta }}_{n}^{(k) }
(u,\xi)\right\} \\ 
+\lambda _{k}\exp \left\{ \sum\limits_{s=1}^{n-1}{(-1) ^{s}
\dfrac{\tilde{\mathcal{E}}_{s}(z) -\hat{E}_{s}\left( {z^{(k)}}\right) }{u^{s}}}\right\} \left\{ {1+\eta _{n,k}
(u,z) }\right\} \left\{ {1+\tilde{{\eta }}_{n}^{(j) }
(u,\xi) }\right\},
\label{eq37}
\end{multline}
where $j=\pm 1$, $k=0$ for $z\in T_{0,\pm 1}\cup T_{\pm 1,0}$, and $j=\pm 1$
, $k=\mp 1$ for $z\in T_{\pm 1,\mp 1}$. Under the same conditions

\begin{multline}
2u^{1/3}\zeta ^{1/2}B(u,z) \\ =\lambda _{j}\exp \left\{
\sum\limits_{s=1}^{n-1}\dfrac{\mathcal{E}_{s}(z) -\hat{E}
_{s}\left( {z^{(j) }}\right) }{u^{s}}\right\}
\left\{ {1+\eta _{n,j}(u,z) }\right\} \left\{ {1+\eta
_{n}^{(k) }(u,\xi) }\right\}  \\ 
-\lambda _{k}\exp \left\{ \sum\limits_{s=1}^{n-1}{(-1) ^{s}
\dfrac{\mathcal{E}_{s}(z) -\hat{E}_{s}\left( {z^{(k)
}}\right) }{u^{s}}}\right\} \left\{ {1+\eta _{n,k}(u,z) }
\right\} \left\{ {1+\eta _{n}^{(j) }(u,\xi) }
\right\}.
\label{eq39}
\end{multline}

\end{corollary}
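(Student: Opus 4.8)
The plan is to read \eqref{eq34}, \eqref{eq35} and \eqref{eq36} as three scalar instances of the single linear relation
\[
C_j W_j(u,\zeta) = \mathrm{Ai}_j(u^{2/3}\zeta)\,A(u,z) + \mathrm{Ai}_j'(u^{2/3}\zeta)\,B(u,z) \quad (j=0,\pm1),
\]
where $C_0=(2\pi^{1/2}u^{1/6})^{-1}$ and $C_{\pm1}=e^{\pm\pi i/6}\lambda_{\pm1}(2\pi^{1/2}u^{1/6})^{-1}$ are read off from the prefactors on the left of those displays (with $\lambda_0:=1$). For $z\in Z_j\cap Z_k$ both $W_j$ and $W_k$ carry the Liouville--Green representation of \Cref{thm:2.1}, so I would fix the pair $(j,k)$ exactly as dictated by the region ($k=0$, $j=\pm1$ on $T_{0,\pm1}\cup T_{\pm1,0}$; and $j=\pm1$, $k=\mp1$ on $T_{\pm1,\mp1}$) and solve the resulting $2\times2$ system for $A$ and $B$ by Cramer's rule, so that $A=(C_jW_j\,\mathrm{Ai}_k'-C_kW_k\,\mathrm{Ai}_j')/D_{jk}$ and $B=(\mathrm{Ai}_j\,C_kW_k-C_jW_j\,\mathrm{Ai}_k)/D_{jk}$, with common denominator $D_{jk}=\mathrm{Ai}_j\,\mathrm{Ai}_k'-\mathrm{Ai}_j'\,\mathrm{Ai}_k$.

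Since $\mathrm{Ai}_j(u^{2/3}\zeta)$ and $\mathrm{Ai}_k(u^{2/3}\zeta)$ both solve $d^2W/d\zeta^2=u^2\zeta W$, which has no first-derivative term, the quantity $D_{jk}$ is a constant in $\zeta$; I would evaluate it from the leading terms of \eqref{eq24}--\eqref{eq25} for the adjacent pairs, obtaining $D_{10}=-e^{\pi i/6}/(2\pi)$ and $D_{-1,0}=-e^{-\pi i/6}/(2\pi)$, and then use the Airy connection formula \eqref{eq36a} to reduce the mixed pair to these, giving $D_{1,-1}=-i/(2\pi)$. This constant supplies the factor $2$ on the left of \eqref{eq37} and \eqref{eq39}, while its phase is precisely what cancels the $e^{\pm\pi i/6}$ carried by $C_{\pm1}$, leaving the clean coefficients $\lambda_j$ as displayed.

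Next I would substitute the Liouville--Green expansions \eqref{eq10}--\eqref{eq11} together with the Airy expansions \eqref{eq24}--\eqref{eq25} and \eqref{eq30}--\eqref{eq31} into the two Cramer numerators. In each product the factors $\zeta^{\pm1/4}$ and $u^{\pm1/6}$ cancel pairwise (the $B$-numerator retaining the overall $u^{-1/3}\zeta^{-1/2}$ that accounts for the prefactor $2u^{1/3}\zeta^{1/2}$ in \eqref{eq39}), the $W_m$-factor contributes the error $1+\eta_{n,m}$ and the Airy factor contributes $1+\tilde\eta_n^{(m')}$ in the $A$-numerator and $1+\eta_n^{(m')}$ in the $B$-numerator. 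The decisive algebraic step is the merging of the exponents: the $\hat E_s$ from $W_m$ combines with the $\tilde a_s$ of \eqref{eq25}/\eqref{eq31} through the definition \eqref{eq38} of $\tilde{\mathcal E}_s$ to produce the $A$-formula, and with the $a_s$ of \eqref{eq24}/\eqref{eq30} through \eqref{eq40} to produce $\mathcal E_s$ in the $B$-formula. The $(-1)^s$ factors then land correctly because the exponent of $u\xi$ in $W_0$ and $\mathrm{Ai}_0$ (namely $-u\xi$) is opposite to that in $W_{\pm1}$ and $\mathrm{Ai}_{\pm1}$; reading off the surviving cross-products $\{1+\eta_{n,j}\}\{1+\tilde\eta_n^{(k)}\}$ and $\{1+\eta_{n,j}\}\{1+\eta_n^{(k)}\}$, and the minus sign coming from the orientation of the $B$-numerator, reproduces \eqref{eq37} and \eqref{eq39} exactly.

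I expect the main obstacle to be the branch bookkeeping rather than any analysis. The adjacent cases ($k=0$) are transparent: the $e^{u\xi}$ of $W_{\pm1}$ cancels the $e^{-u\xi}$ of $\mathrm{Ai}_0^{(\prime)}$, and the coefficient merger is immediate. The mixed region $T_{\pm1,\mp1}$ is more delicate, since under a naive reading $W_{1}$ and $W_{-1}$ (and likewise $\mathrm{Ai}_1$, $\mathrm{Ai}_{-1}$) share the same dominant exponential; the cancellation there hinges on the fact, guaranteed by the sign conventions of \Cref{thm:2.1} and the clause in \Cref{cor:A} that the Airy expansions use \emph{the same branches} of $\xi$, that the relevant branches satisfy $\xi^{[1]}=-\xi^{[-1]}$ throughout the overlap, so that $e^{u\xi^{[1]}}e^{u\xi^{[-1]}}=1$ and, simultaneously, $\tilde a_s(\xi^{[-1]})^{-s}=(-1)^s\tilde a_s(\xi^{[1]})^{-s}$ (and the analogous identity for $a_s$). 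The latter is exactly what lets $\hat E_s$ and the Airy coefficient combine into the single-valued $\tilde{\mathcal E}_s$ and $\mathcal E_s$, and it relies on the meromorphy of $\hat E_{2s}$ and of $(z-z_0)^{1/2}\hat E_{2s+1}$ noted in the proof of the connection lemma. Once this branch identification and the phase of $D_{jk}$ are settled, the remaining simplifications are mechanical.
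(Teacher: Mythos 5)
Your proposal is correct and follows essentially the same route as the paper: for each sector the paper likewise solves the appropriate pair of (\ref{eq34})--(\ref{eq36}) as a linear system for $A(u,z)$ and $B(u,z)$ (your Cramer's-rule computation with the Airy Wronskians $D_{10}=-e^{\pi i/6}/(2\pi)$, $D_{-1,0}=-e^{-\pi i/6}/(2\pi)$, $D_{1,-1}=-i/(2\pi)$ reproduces the paper's (\ref{eq42})--(\ref{eq43}), modulo an evident typo in the paper's (\ref{eq43})), and then substitutes (\ref{eq10}), (\ref{eq11}), (\ref{eq24}), (\ref{eq25}), (\ref{eq30}) and (\ref{eq31}) so that the $\hat{E}_s$ merge with $a_s s^{-1}\xi^{-s}$ and $\tilde{a}_s s^{-1}\xi^{-s}$ into $\mathcal{E}_s$ and $\tilde{\mathcal{E}}_s$ via (\ref{eq40}) and (\ref{eq38}). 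Your explicit treatment of the branch bookkeeping in $T_{\pm 1,\mp 1}$ simply makes precise what the paper leaves implicit in ``for the other sectors we repeat this procedure.''
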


\begin{proof}
Let $z\in T_{0,-1}\cup T_{-1,0}$. Solving (\ref{eq34}) and (\ref{eq36}) 
\begin{multline}
A(u,z) =\pi ^{1/2}u^{-1/6}\left\{ {e^{\pi i/6}W_{0}(u,\zeta) \mathrm{Ai}_{-1}^{\prime }\left( {u^{2/3}\zeta }\right) }
\right.  \\ 
\left. {-\lambda _{-1}W_{-1}(u,\zeta) \mathrm{Ai}^{\prime
}\left( {u^{2/3}\zeta }\right) }\right\},
\label{eq42}
\end{multline}
and 
\begin{multline}
B(u,z) =\pi ^{1/2}u^{-1/6}\left\{ {e^{\pi i/6}W_{0}(u,\zeta) \mathrm{Ai}_{-1}^{\prime }\left( {u^{2/3}\zeta }\right) }
\right.  \\ 
\left. {-e^{\pi i/6}W_{0}(u,\zeta) \mathrm{Ai}_{-1}\left( {
u^{2/3}\zeta }\right) }\right\}.
\label{eq43}
\end{multline}
Then use (\ref{eq10}), (\ref{eq11}), (\ref{eq24}), (\ref{eq25}), (\ref{eq30}) and (\ref{eq31}). For the other sectors we repeat this procedure, starting
by solving an appropriate pair of (\ref{eq34}) - (\ref{eq36}) for $A(u,z) $ and $B(u,z) $.
\end{proof}

We define explicit error terms associated with the expansions in our main theorem below. To do so, first let 
\begin{equation}
\eta _{n,j}^{(k) }(u,z) =\eta_{n,j}(u,z) +\eta _{n}^{(k)}(u,\xi) +\eta_{n,j}(u,z) \eta_{n}^{(k) }(u,\xi),
\label{eq44}
\end{equation}
\begin{equation}
\tilde{{\eta }}_{n,j}^{(k) }(u,z) =\eta_{n,j}(u,z) +\tilde{{\eta }}_{n}^{(k) }(u,\xi) +\eta_{n,j}(u,z) \tilde{{\eta }}_{n}^{(k) }(u,\xi),
\label{eq45}
\end{equation}
\begin{equation}
\mathcal{A}_{2m+2}(u,z) =\left[ \mu _{2m+2}(u) 
\right] ^{-1}f^{-1/4}(z) \zeta ^{1/4}A(u,z),
\label{eq47}
\end{equation}
and
\begin{equation}
\mathcal{B}_{2m+2}(u,z) =\left[ \mu _{2m+2}(u) 
\right] ^{-1}f^{-1/4}(z) \zeta ^{1/4}B(u,z),
\label{eq48}
\end{equation}
where $\mu _{n}(u)$ is given by (\ref{eq46}).

Then using (\ref{lambda}), (\ref{eq37}), (\ref{eq45}) with $n=2m+2$, and (\ref{eq47}) we have 
\begin{multline}
2\left\{ \dfrac{f(z) }{\zeta }\right\} ^{1/4}\mathcal{A}
_{2m+2}(u,z) =\exp \left\{ \sum\limits_{s=1}^{2m+1}{\dfrac{
\tilde{\mathcal{E}}_{s}(z) }{u^{s}}}\right\}  \\ 
+\exp \left\{ \sum\limits_{s=1}^{2m+1}{(-1) ^{s}\dfrac{\tilde{
\mathcal{E}}_{s}(z) }{u^{s}}}\right\} +\tilde{\varepsilon}
_{2m+2}(u,z),
\label{eq51a}
\end{multline}
where 
\begin{multline}
\tilde{\varepsilon}_{2m+2}(u,z) =\exp \left\{ 
\sum\limits_{s=1}^{2m+1}\dfrac{\tilde{\mathcal{E}}_{s}(z) }{
u^{s}}\right\} \tilde{e}_{2m+2,j}^{(k) }(u,z)  \\
+\exp \left\{ \sum\limits_{s=1}^{2m+1}{(-1) ^{s}\dfrac{\tilde{
\mathcal{E}}_{s}(z) }{u^{s}}}\right\} \tilde{e}
_{2m+2,k}^{(j) }(u,z),  
\label{eq51b}
\end{multline}
in which
\begin{equation}
\tilde{e}_{n,j}^{(k) }(u,z) =\tilde{\eta}
_{n,j}^{(k) }(u,z) +{\delta }_{n,j}(u)
+\tilde{\eta}_{n,j}^{(k) }(u,z) \delta_{n,j} (u)   
\label{eq51c}
\end{equation}
and $\tilde{\varepsilon}_{2m+2}(u,z) ={\mathcal{O}}\left( u^{-2m-2}\right) $ uniformly for $z\in Z_{j}\cap
Z_{k}$.

Similarly from (\ref{lambda}), (\ref{eq39}), (\ref{eq44}) with $n=2m+2$, and (\ref{eq48})
\begin{multline}
2u^{1/3}f^{1/4}(z) \zeta ^{1/4}\mathcal{B}_{2m+2}(u,z) =\exp \left\{ \sum\limits_{s=1}^{2m+1}\dfrac{\mathcal{E}_{s}\left(
z\right) }{u^{s}}\right\}  \\
-\exp \left\{ \sum\limits_{s=1}^{2m+1}{(-1) ^{s}\dfrac{
\mathcal{E}_{s}(z) }{u^{s}}}\right\} +\varepsilon _{2m+2}
(u,z),  
\label{eq56b}
\end{multline}
where 
\begin{multline}
\varepsilon _{2m+2}(u,z) =\exp \left\{ \sum\limits_{s=1}^{2m+1}{
\dfrac{\mathcal{E}_{s}(z) }{u^{s}}}\right\} e_{2m+2,j}^{\left(
k\right) }(u,z)  \\
-\exp \left\{ \sum\limits_{s=1}^{2m+1}{(-1) ^{s}\dfrac{
\mathcal{E}_{s}(z) }{u^{s}}}\right\} e_{2m+2,k}^{\left(
j\right) }(u,z),  
\label{eq56d}
\end{multline}
with 
\begin{equation}
e_{n,j}^{(k) }(u,z) =\eta _{n,j}^{(k)
}(u,z) +{\delta }_{n,j}(u) +\eta _{n,j}^{\left(
k\right) }(u,z) \delta _{n,j}(u)   
\label{eq56e}
\end{equation}
and $\varepsilon _{2m+2}(u,z) ={\mathcal{O}}\left(
u^{-2m-2}\right) $ uniformly for $z\in Z_{j}\cap Z_{k}$.

In order to simplify our error bounds we shall make use of the following elementary result.

\begin{lemma}
Let $a$, $b$, $c$, and $d$ be real and non-negative. Then if 
\begin{equation}
a\leq c+d+cd,
\label{L1}
\end{equation}
it follows that
\begin{equation}
a+b+ab\leq \left( b+c+d\right) \left\{ 1+\tfrac{1}{2}\left( b+c+d\right)
\right\} ^{2}.
\label{L2}
\end{equation}
\end{lemma}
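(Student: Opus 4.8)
The plan is to put both inequalities into factored form and then reduce the claim to a single application of the arithmetic--geometric mean inequality. First I would note the algebraic identities $c+d+cd=(1+c)(1+d)-1$ and $a+b+ab=(1+a)(1+b)-1$, so that the hypothesis (\ref{L1}) is \emph{exactly} the statement $1+a\leq (1+c)(1+d)$. Since $b\geq 0$, multiplying this inequality by the non-negative factor $1+b$ gives
\[
(1+a)(1+b)\leq (1+b)(1+c)(1+d),
\]
so it suffices to bound the triple product on the right by the right-hand side of (\ref{L2}) increased by $1$.

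Next, writing $s=b+c+d\geq 0$, the three non-negative factors $1+b$, $1+c$, $1+d$ have sum $3+s$, so the arithmetic--geometric mean inequality yields
\[
(1+b)(1+c)(1+d)\leq \left(1+\tfrac{1}{3}s\right)^{3}=1+s+\tfrac{1}{3}s^{2}+\tfrac{1}{27}s^{3}.
\]
Comparing coefficients and using $s\geq 0$ together with $\tfrac{1}{3}\leq 1$ and $\tfrac{1}{27}\leq\tfrac{1}{4}$, I would bound this termwise by $1+s+s^{2}+\tfrac{1}{4}s^{3}=1+s\left(1+\tfrac{1}{2}s\right)^{2}$. Chaining the two displays and subtracting $1$ then produces $a+b+ab=(1+a)(1+b)-1\leq s\left(1+\tfrac{1}{2}s\right)^{2}$, which, after restoring $s=b+c+d$, is precisely (\ref{L2}).

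The argument is entirely elementary, so there is no genuine analytic obstacle; the only real step is spotting the reformulation $1+a\leq(1+c)(1+d)$ and $a+b+ab=(1+a)(1+b)-1$, after which the estimate is immediate. I would also remark that the stated bound carries substantial slack: the sharper coefficients $\tfrac{1}{3}s^{2}$ and $\tfrac{1}{27}s^{3}$ coming from AM--GM already suffice. An entirely polynomial route avoiding AM--GM is likewise available, by expanding $(1+b)(1+c)(1+d)=1+s+e_{2}+e_{3}$ with $e_{2}=bc+bd+cd$ and $e_{3}=bcd$, and then invoking the symmetric-function bounds $e_{2}\leq\tfrac{1}{3}s^{2}$ (which follows from $\sum_{i<j}(x_{i}-x_{j})^{2}\geq 0$) and $e_{3}\leq\tfrac{1}{27}s^{3}$; both again fall short of the cruder coefficients $s^{2}$ and $\tfrac{1}{4}s^{3}$ appearing in (\ref{L2}).
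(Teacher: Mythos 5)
Your argument is correct, and it settles the key estimate by a genuinely different mechanism than the paper. Both proofs in fact open with the same move: your multiplicative rewriting $1+a\leq(1+c)(1+d)$, multiplied through by $1+b$, is precisely the substitution of the hypothesis that the paper performs additively when it bounds $a+b+ab$ above by $b+c+d+cd+b\left(c+d+cd\right)$ in (\ref{L3}). Where you diverge is in how the resulting polynomial inequality is verified. The paper expands $\left(b+c+d\right)\left\{1+\tfrac{1}{2}\left(b+c+d\right)\right\}^{2}-\left\{b+c+d+cd+b\left(c+d+cd\right)\right\}$ directly and checks that every negative term is cancelled by a matching positive one --- mechanical and self-contained, but structurally opaque. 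You instead apply the AM--GM inequality to the three factors $1+b$, $1+c$, $1+d$, obtaining $\left(1+\tfrac{1}{3}s\right)^{3}=1+s+\tfrac{1}{3}s^{2}+\tfrac{1}{27}s^{3}$ with $s=b+c+d$, and finish with the termwise comparison against $1+s\left(1+\tfrac{1}{2}s\right)^{2}=1+s+s^{2}+\tfrac{1}{4}s^{3}$; every step is valid since the factors are at least $1$ and $s\geq 0$. Your route buys a strictly sharper intermediate bound and localizes the slack of (\ref{L2}) in the quadratic and cubic terms only, which dovetails nicely with the paper's own Remark that the bound is tight to first order when all four quantities are $\mathcal{O}\left(\varepsilon\right)$; your symmetric-function variant via $e_{2}\leq\tfrac{1}{3}s^{2}$ and $e_{3}\leq\tfrac{1}{27}s^{3}$ is essentially the expanded form of the same AM--GM step. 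What the paper's expand-and-cancel verification buys in exchange is independence from any named inequality, and a proof that transfers unchanged if the constants in (\ref{L2}) are perturbed.
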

\begin{proof}
We have from (\ref{L1}) 
\begin{multline}
\left( b+c+d\right) \left\{ 1+\tfrac{1}{2}\left( b+c+d\right) \right\}
^{2}-\left( a+b+ab\right) \\ 
\geq \left( b+c+d\right) \left\{ 1+\tfrac{1}{2}\left( b+c+d\right) \right\}
^{2}-\left\{ b+c+d+cd+b\left( c+d+cd\right) \right\}.
\label{L3}
\end{multline}
On expanding the RHS it is easy to verify that all the negative terms cancel out, and the result follows.
\end{proof}

\begin{remark}If the constants are small and of the same order of magnitude the bound (\ref{L2}) is quite sharp; more precisely, if each constant is $\mathcal{O}
\left( \varepsilon \right)$ where $\varepsilon \rightarrow 0$ then 
\begin{equation}
a+b+ab=\left( b+c+d\right) \left\{ 1+\tfrac{1}{2}\left( b+c+d\right)
\right\} ^{2}+{\mathcal{O}\left( \varepsilon ^{2}\right) }.
\end{equation}
This is easily verifiable by examining the negative terms appearing in the expansion of the RHS of (\ref{L3}).
\end{remark}

Now from (\ref{eq44}) 
\begin{equation}
\left\vert \eta _{n,j}^{(k) }(u,z) \right\vert
\leq \left\vert \eta _{n,j}(u,z) \right\vert +\left\vert \eta
_{n}^{(k) }(u,\xi) \right\vert +\left\vert \eta
_{n,j}(u,z) \right\vert \left\vert \eta _{n}^{(k)
}(u,\xi) \right\vert.
\label{L4}
\end{equation}
Then on identifying the corresponding terms of (\ref{L4}) with those of (\ref{L1}) we deduce from (\ref{L2}) that for $z\in Z_{j}$ ($j=0,\pm 1$) 
\begin{multline}
\left\vert \eta _{n,j}^{(k) }(u,z) \right\vert
+\left\vert {\delta }_{n,j}(u) \right\vert +\left\vert \eta
_{n,j}^{(k) }(u,z) \right\vert \left\vert {\delta }
_{n,j}(u) \right\vert  \\ 
\leq \left( \left\vert {\delta }_{n,j}(u) \right\vert
+\left\vert \eta _{n,j}(u,z) \right\vert +\left\vert \eta
_{n}^{(k) }(u,\xi) \right\vert \right)  \\ 
\times \left\{ 1+\tfrac{1}{2}\left( \left\vert {\delta }_{n,j}\left(
u\right) \right\vert +\left\vert \eta _{n,j}(u,z) \right\vert
+\left\vert \eta _{n}^{(k) }(u,\xi) \right\vert
\right) \right\} ^{2},
\label{eq101}
\end{multline}
and hence from (\ref{eq12}) and (\ref{eq56e})
\begin{equation}
\left\vert e_{n,j}^{(k) }(u,z) \right\vert
\leq {|u| ^{-n}}e_{n,j}(u,z) \left\{ 1+\tfrac{1}{2}{|u| ^{-n}}e_{n,j}(u,z)
\right\} ^{2},
\label{eq102}
\end{equation}
where $e_{n,j}(u,z) $ is given by (\ref{eq100}) below. A similar bound can be established for $\tilde{e}_{n,j}^{(k) }(u,z)$.

Collecting together (\ref{wjs}), (\ref{eq47}) - (\ref{eq56d}), and (\ref{eq102}) we have arrived at the main result of this section:

\begin{theorem}
\label{thm:main1}
Assume \Cref{hyp} and let $z\in Z_{j}\cap Z_{k}$ ($j,k\in \left\{0,1,-1\right\} $, $j\neq k$). Then for each positive integer $m$ there exist three solutions of (\ref{eq1}) of the form 
\begin{equation}
w_{m,l}(u,z) =\mathrm{Ai}_{l}\left( u^{2/3}\zeta \right) 
\mathcal{A}_{2m+2}(u,z) +\mathrm{Ai}_{l}^{\prime }\left(u^{2/3}\zeta\right) \mathcal{B}_{2m+2}(u,z) \ (l=0,\pm 1),
\label{solutions}
\end{equation}
where
\begin{multline}
\mathcal{A}_{2m+2}(u,z) =\left\{ \dfrac{\zeta }{f(z) }\right\} ^{1/4}\exp \left\{ \sum\limits_{s=1}^{m}\dfrac{
\mathcal{\tilde{E}}_{2s}(z) }{u^{2s}}\right\} \cosh \left\{ \sum\limits_{s=0}^{m}\dfrac{\mathcal{\tilde{E}}_{2s+1}(z) }{u^{2s+1}}\right\}  \\ 
+\dfrac{1}{2}\left\{ \dfrac{\zeta }{f(z) }\right\} ^{1/4}\tilde{\varepsilon}_{2m+2}(u,z),
\label{Ascript}
\end{multline}
and
\begin{multline}
\mathcal{B}_{2m+2}(u,z) =\dfrac{1}{u^{1/3}\left\{ \zeta f(z) \right\} ^{1/4}}\exp \left\{ \sum\limits_{s=1}^{m}\dfrac{
\mathcal{E}_{2s}(z) }{u^{2s}}\right\} \sinh \left\{ \sum\limits_{s=0}^{m}\dfrac{\mathcal{E}_{2s+1}(z) }{u^{2s+1}}\right\}  \\ 
+\dfrac{\varepsilon _{2m+2}(u,z) }{2u^{1/3}\left\{ \zeta f(z) \right\} ^{1/4}},
\label{Bscript}
\end{multline}
such that 
\begin{multline}
\left\vert \tilde{\varepsilon}_{2m+2}(u,z) \right\vert \leq \dfrac{1}{{|u| ^{2m+2}}}\exp \left\{\sum\limits_{s=1}^{2m+1}\mathrm{Re}{\dfrac{\mathcal{\tilde{E}}_{s}(z) }{u^{s}}}\right\} \tilde{e}_{2m+2,j}(u,z) \left\{ 1+\dfrac{\tilde{e}_{2m+2,j}(u,z) }{2{|u| ^{2m+2}}}\right\}
^{2} \\
+\dfrac{1}{{|u| ^{2m+2}}}\exp \left\{
\sum\limits_{s=1}^{2m+1}{(-1) ^{s}\mathrm{Re}\dfrac{\mathcal{
\tilde{E}}_{s}(z) }{u^{s}}}\right\} \tilde{e}_{2m+2,k}\left( {
u,z}\right) \left\{ 1+\dfrac{\tilde{e}_{2m+2,k}(u,z) }{2{
|u| ^{2m+2}}}\right\} ^{2},  \label{Aerror}
\end{multline}
in which 
\begin{multline}
\tilde{e}_{n,j}(u,z) ={|u| ^{n}}\left\vert 
{\delta }_{n,j}(u) \right\vert 
+\omega _{n,j}(u,z) \exp \left\{ {|u|
^{-1}\varpi _{n,j}(u,z) +|u| ^{-n}\omega
_{n,j}(u,z) }\right\}  \\
+\tilde{\gamma}_{n}(u,\xi) \exp \left\{ {\left\vert
u\right\vert ^{-1}\tilde{\beta}_{n}(u,\xi) +\left\vert
u\right\vert ^{-n}\tilde{\gamma}_{n}(u,\xi) }\right\},
\label{eq103}
\end{multline}
and 
\begin{multline}
\left\vert \varepsilon _{2m+2}(u,z) \right\vert \leq \dfrac{1}{|u| ^{2m+2}}\exp \left\{ \sum\limits_{s=1}^{2m+1}
\mathrm{Re}{\dfrac{\mathcal{E}_{s}(z) }{u^{s}}}\right\} e_{2m+2,j}(u,z) \left\{ 1+\dfrac{e_{2m+2,j}(u,z) }{2{|u| ^{2m+2}}}\right\} ^{2} \\ +\dfrac{1}{{|u| ^{2m+2}}}\exp \left\{ \sum\limits_{s=1}^{2m+1}{(-1) ^{s}\mathrm{Re}\dfrac{\mathcal{E}_{s}(z) }{u^{s}}}\right\} e_{2m+2,k}(u,z) \left\{1+\dfrac{e_{2m+2,k}(u,z)}{2{|u| ^{2m+2}}}
\right\} ^{2},
\label{Berror}
\end{multline}
where 
\begin{multline}
e_{n,j}(u,z) ={|u| ^{n}}\left\vert {\delta 
}_{n,j}(u) \right\vert  
+\omega _{n,j}(u,z) \exp \left\{ {|u|
^{-1}\varpi _{n,j}(u,z) +|u| ^{-n}\omega
_{n,j}(u,z) }\right\}  \\
+\gamma _{n}(u,\xi) \exp \left\{ {|u|
^{-1}\beta _{n}(u,\xi) +|u| ^{-n}\gamma
_{n}(u,\xi) }\right\}.
\label{eq100}
\end{multline}
In (\ref{Aerror}) and (\ref{Berror}) $j=\pm 1$, $k=0$ for $z\in T_{0,\pm
1}\cup T_{\pm 1,0}$, and $j=\pm 1$, $k=\mp 1$ for $z\in T_{\pm 1,\mp 1}$.
\end{theorem}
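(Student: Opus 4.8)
The plan is to assemble the statement from the algebraic identities and error representations already established, rather than to prove anything analytically new. First I would observe that by (\ref{eq47}) and (\ref{eq48}) the functions $\mathcal{A}_{2m+2}$ and $\mathcal{B}_{2m+2}$ are the common constant multiple $[\mu_{2m+2}(u)]^{-1}f^{-1/4}(z)\zeta^{1/4}$ of $A(u,z)$ and $B(u,z)$. Since $w_j$ in (\ref{wjs}) solves (\ref{eq1}) and $\mu_{2m+2}(u)$ depends only on $u$, the combination in (\ref{solutions}) equals $[\mu_{2m+2}(u)]^{-1}w_l$ and is therefore again a solution of (\ref{eq1}); this gives the existence of the three solutions. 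The closed forms (\ref{Ascript}) and (\ref{Bscript}) then follow by solving (\ref{eq51a}) and (\ref{eq56b}) for $\mathcal{A}_{2m+2}$ and $\mathcal{B}_{2m+2}$, after recognising the sum (respectively difference) of the two exponentials with sign-flipped odd terms as $2\exp\{\sum_{s=1}^{m}\tilde{\mathcal{E}}_{2s}(z)/u^{2s}\}\cosh\{\sum_{s=0}^{m}\tilde{\mathcal{E}}_{2s+1}(z)/u^{2s+1}\}$ and the analogous $\sinh$ expression.

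For the bounds I would start from the exact representations (\ref{eq51b}) and (\ref{eq56d}) of $\tilde{\varepsilon}_{2m+2}$ and $\varepsilon_{2m+2}$. Taking moduli and using $|\exp\{\sum_s \tilde{\mathcal{E}}_s(z)/u^s\}| = \exp\{\sum_s \mathrm{Re}(\tilde{\mathcal{E}}_s(z)/u^s)\}$ reduces everything to bounding the mixed error factors $\tilde{e}^{(k)}_{n,j}$ and $e^{(k)}_{n,j}$ of (\ref{eq51c}) and (\ref{eq56e}) with $n=2m+2$.

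The crux is that each of these factors is a doubly nested combination of the form $a+b+ab$: (\ref{eq56e}) combines $\eta^{(k)}_{n,j}$ with $\delta_{n,j}$, while $\eta^{(k)}_{n,j}$ in (\ref{eq44}) itself combines $\eta_{n,j}$ with $\eta^{(k)}_n$. The preceding elementary Lemma is tailored to collapse exactly this nesting. In the notation of (\ref{L1})--(\ref{L2}) I would set $a=|\eta^{(k)}_{n,j}|$, $c=|\eta_{n,j}|$, $d=|\eta^{(k)}_n|$, so that hypothesis (\ref{L1}) is precisely (\ref{L4}), and $b=|\delta_{n,j}|$; the conclusion (\ref{L2}) then yields (\ref{eq101}) and hence (\ref{eq102}), in which the three independent contributions $|\delta_{n,j}|$, $|\eta_{n,j}|$, $|\eta^{(k)}_n|$ appear additively inside a single squared bracket. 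Substituting the explicit estimates of \Cref{thm:2.1} (through $\omega_{n,j}$ and $\varpi_{n,j}$ in (\ref{eq12})) and of \Cref{thm:10} (through $\gamma_n,\beta_n$ and their tilded counterparts), together with the $\mathcal{O}(u^{-n})$ estimate for $\delta_{n,j}$ coming from (\ref{delta}), identifies the composite quantities $e_{n,j}$ of (\ref{eq100}) and $\tilde{e}_{n,j}$ of (\ref{eq103}). Folding these back into the moduli from the previous step produces (\ref{Aerror}) and (\ref{Berror}).

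The main obstacle, once the results of \Cref{sec2} and the identities (\ref{eq51a})--(\ref{eq56d}) are in hand, is purely organisational: tracking the several layers of error products so that the Lemma is applied to the correct groupings, and checking that its nonnegativity hypotheses hold for the moduli, which is automatic. The single genuine verification is (\ref{L4}), namely $|\eta^{(k)}_{n,j}|\le |\eta_{n,j}|+|\eta^{(k)}_n|+|\eta_{n,j}||\eta^{(k)}_n|$, which is immediate from the definition (\ref{eq44}) and the triangle inequality. No fresh analytic estimate is required; the substance of the theorem lies in the fact that the Liouville--Green and Airy expansions of \Cref{sec2} have been arranged so that their errors recombine into the clean product bounds (\ref{Aerror}) and (\ref{Berror}).
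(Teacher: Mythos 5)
Your proposal is correct and takes essentially the same route as the paper: the theorem there is obtained by exactly the assembly you describe, with the solutions coming from (\ref{eq34})--(\ref{eq36}) and the definitions (\ref{eq47})--(\ref{eq48}), the closed forms (\ref{Ascript})--(\ref{Bscript}) from the $\cosh$/$\sinh$ recombination of the two exponentials in (\ref{eq51a}) and (\ref{eq56b}), and the bounds from taking moduli in (\ref{eq51b}) and (\ref{eq56d}). In particular, your identification $a=\vert\eta_{n,j}^{(k)}\vert$, $b=\vert\delta_{n,j}\vert$, $c=\vert\eta_{n,j}\vert$, $d=\vert\eta_{n}^{(k)}\vert$, with (\ref{L1}) instantiated by the triangle-inequality estimate (\ref{L4}), is precisely how the paper passes from the elementary lemma to (\ref{eq101}) and (\ref{eq102}).
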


\begin{remark}
Here $\mathcal{E}_{s}(z) $ and $\tilde{\mathcal{E}}
_{s}(z) $ are given by (\ref{eq40}) and (\ref{eq38}), $\omega_{n,j}(u,z) $ and $\varpi _{n,j}(u,z)$ are given by (\ref{eq13}) and (\ref{eq14}), $\gamma _{n}(u,\xi)$, $\beta _{n}(u,\xi)$, $\tilde{\gamma}_{n}(u,\xi)$ and $\tilde{\beta}_{n}(u,\xi)$ are given by (\ref{eq28}), (\ref{eq29}), (\ref{eq28a}) and (\ref{eq29a}), ${\delta }_{n,0}(u) =0$, and ${\delta}_{n,\pm 1}(u) $ are bounded using (\ref{delta}); in the common
situation where the connection coefficients $\lambda _{\pm 1}$ of (\ref{eq15}) are known we instead use the exact expressions
\begin{equation}
{\delta }_{n,\pm 1}(u) =\lambda _{\pm 1}\exp \left\{ 
\sum\limits_{s=1}^{n-1}{\frac{{{(-1) ^{s}}}\hat{E}_{s}\left( {z^{(0) }}\right) -\hat{E}_{s}\left( {z^{( \pm 1) }}
\right) }{u^{s}}}\right\} -1.
\label{exactdelta}
\end{equation}
Since $\delta_{n,j}(u)={\mathcal{O}}\left( u^{-n}\right)$ we observe that $\tilde{e}_{n,j}(u,z), e_{n,j}(u,z) ={\mathcal{O}}(1) $ as $u\rightarrow \infty $ uniformly for $z\in Z_{j}$, and hence the bounds for $\tilde{\varepsilon}_{2m+2}(u,z)$ and $\varepsilon _{2m+2}(u,z)$ are both ${\mathcal{O}}\left( u^{-2m-2}\right)$  uniformly for $z\in Z_{j}\cap Z_{k}$.
\end{remark}
\begin{remark}
If the series on the RHS of (\ref{eq56b}) are expanded and combined as an inverse series of $u$ then only (inverse) odd powers remain. Hence one would expect that $\varepsilon _{2m+2}(u,z)={\mathcal{O}}\left( u^{-2m-3}\right)$, and consequently our error bound for the $\mathcal{B}_{2m+2}(u,z)$ expansion  overestimates the true error by a factor ${\mathcal{O}}(u)$. With a more delicate analysis it is possible to sharpen the above bounds to reflect this (and also for the corresponding bounds in \cref{sec4} below). This will be pursued in a subsequent paper.
\label{remark6}
\end{remark}

\section{Error bounds in a vicinity of the turning point}
\label{sec4}

We now consider the case where $z$ is close to $z_{0}$, so that the bounds of the preceding section can no longer be directly applied. As shown in \cite{Dunster:2017:COA} the coefficient functions of (\ref{solutions}) can
be computed to high accuracy by Cauchy integrals in the present case. 

Here we use the same idea to bound the error terms in (\ref{Ascript}) and (\ref{Bscript}). The idea is quite simple: we express the error terms as Cauchy integrals around a simple positively orientated loop $\Gamma $ (say) which encloses the turning point $z_{0}$ and the point $z$ in question (but is not too close to these points), and which lies in the intersection of $Z_{0}$, $Z_{1}$, and $Z_{-1}$. We then bound the integrand of each integral along its contour using the results of the previous section, from which a bound for the error terms follow. The main result is given by \Cref{thm:main2} below

Our choice of $\Gamma $ is the circle $\left\{ z:\left\vert z-z_{0}\right\vert =r_{0}\right\} 
$ for $r_{0}>0$ is arbitrarily chosen but not too small, and such that the loop lies in the intersection of $Z_{0}$, $Z_{1}$, and $Z_{-1}$.

The following result will be used.

\begin{lemma}
For $\left\vert z-z_{0}\right\vert <r_{0}$
\begin{equation}
\oint_{\left\vert t-z_{0}\right\vert =r_{0}}\left\vert {\dfrac{dt}{t-z}}
\right\vert =l_{0}(z) :=\frac{{4r_{0}K}(k) }{
\left\vert z-z_{0}\right\vert +r_{0}},
\label{eq50}
\end{equation}
where
\begin{equation}
k=\frac{{2}\sqrt{r_{0}\left\vert z-z_{0}\right\vert }}{\left\vert
z-z_{0}\right\vert +r_{0}},
\label{defk}
\end{equation}
and $K(k)$ is the complete elliptic integral of the first kind defined by (\cite[\S 19.2(ii)]{NIST:DLMF})
\begin{equation}
{K}(k) =\int\limits_{0}^{\pi /2}{\dfrac{d\tau }{\sqrt{1-k^{2}\sin ^{2}\left( \tau \right) }}}=\int\limits_{0}^{1}{\dfrac{dt}{\sqrt{
\left( 1-t^{2}\right) \left( 1-k^{2}t^{2}\right) }}\ }\left( 0\leq k<1\right).
\label{Kelliptic}
\end{equation}
\end{lemma}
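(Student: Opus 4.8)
The plan is to reduce the line integral in (\ref{eq50}) to a single real trigonometric integral and then recognise it as a complete elliptic integral. First I would exploit the rotational symmetry of the problem about $z_{0}$: since the circle $|t-z_{0}|=r_{0}$ and the arclength element $|dt|$ are invariant under rotations centred at $z_{0}$, and $|t-z|$ depends only on the position of $z$ relative to such a rotation, the left-hand side of (\ref{eq50}) depends on $z$ only through $\rho:=|z-z_{0}|$. Hence I may assume without loss of generality that $z-z_{0}=\rho$ is real and nonnegative, with $0\le\rho<r_{0}$.

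Next I would parametrise the contour by $t=z_{0}+r_{0}e^{i\theta}$, $\theta\in[0,2\pi)$, so that $|dt|=r_{0}\,d\theta$ and $|t-z|^{2}=|r_{0}e^{i\theta}-\rho|^{2}=r_{0}^{2}+\rho^{2}-2r_{0}\rho\cos\theta$. This turns the integral into $\int_{0}^{2\pi}r_{0}\,d\theta/\sqrt{r_{0}^{2}+\rho^{2}-2r_{0}\rho\cos\theta}$. The key algebraic step is to apply the half-angle identity $\cos\theta=2\cos^{2}(\theta/2)-1$, which rewrites the radicand as $(r_{0}+\rho)^{2}-4r_{0}\rho\cos^{2}(\theta/2)=(r_{0}+\rho)^{2}\{1-k^{2}\cos^{2}(\theta/2)\}$ with $k$ exactly the quantity in (\ref{defk}). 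Substituting $\phi=\theta/2$ and using the $\pi$-periodicity and symmetry of $\cos^{2}\phi$ collapses the integral to $\{4r_{0}/(r_{0}+\rho)\}\int_{0}^{\pi/2}d\phi/\sqrt{1-k^{2}\cos^{2}\phi}$; the complementary substitution $\phi\mapsto\tfrac{1}{2}\pi-\tau$ then identifies the remaining integral with $K(k)$ as defined in (\ref{Kelliptic}), yielding $l_{0}(z)$.

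The one point requiring care, and what I would flag as the main obstacle, is selecting the correct half-angle identity (the cosine form rather than $\cos\theta=1-2\sin^{2}(\theta/2)$) so that the modulus produced is precisely the $k$ of (\ref{defk}) and satisfies $k<1$. This last inequality is exactly where the hypothesis $|z-z_{0}|<r_{0}$ enters: by the arithmetic–geometric mean inequality $2\sqrt{r_{0}\rho}\le r_{0}+\rho$, with equality if and only if $\rho=r_{0}$, and since $\rho<r_{0}$ strictly we obtain $k<1$. This guarantees both that the denominator $|t-z|$ never vanishes on the contour (so the integral converges) and that $K(k)$ is the standard value given by (\ref{Kelliptic}). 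The remaining manipulations are elementary substitutions with no analytic subtleties.
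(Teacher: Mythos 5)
Your proposal is correct and follows essentially the same route as the paper: parametrize the circle, reduce to the real trigonometric integral $\int_0^{2\pi} r_0\,d\theta/\sqrt{r_0^2+\rho^2-2r_0\rho\cos\theta}$, and convert it to $4r_0 K(k)/(r_0+\rho)$ via a half-angle identity. Your rotational-symmetry reduction to real $z-z_0$ is just a repackaging of the paper's shift $\varphi\to\varphi+\theta$ with $2\pi$-periodicity, and your cosine half-angle identity plus the substitution $\phi\mapsto\tfrac{1}{2}\pi-\tau$ is equivalent to the paper's sign flip $\varphi\to\pi-\varphi$ followed by $\cos\varphi=1-2\sin^2(\varphi/2)$; your explicit AM--GM verification that $k<1$ is a small addition the paper leaves implicit.
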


\begin{remark}$K(k) \sim -\frac{1}{2}\ln \left( 1-k\right) $ as $
k\rightarrow 1-$ (\cite[Eq. 19.12.1]{NIST:DLMF}), and hence from (\ref{eq50}) and
(\ref{defk}) we find that $l_{0}(z) \sim -2\ln \left(
r_{0}-\left\vert z-z_{0}\right\vert \right) $ as $\left\vert z-z_{0}\right\vert \rightarrow r_{0}-$; i.e. $l_{0}(z)$ becomes unbounded (logarithmically) as $z$ approaches $\Gamma $ from its interior. This means that $z$ should not be too close to $\Gamma$ in our subsequent error bounds.
\end{remark}

\begin{proof}
Let $z=z_{0}+ae^{i\theta }$ where $a=\left\vert z-z_{0}\right\vert$, and then with the change of variable $t=z_{0}+r_{0}e^{i\varphi }$ we find that 
\begin{equation}
\oint_{\left\vert t-z_{0}\right\vert =r_{0}}\left\vert {\dfrac{dt}{t-z}}
\right\vert =\int\limits_{0}^{2\pi }{\dfrac{
r_{0}d\varphi }{\left\vert r_{0}e^{i\varphi }-ae^{i\theta }\right\vert }=}
\int\limits_{0}^{2\pi }{\dfrac{r_{0}d\varphi }{\left\vert r_{0}e^{i\left(
\varphi -\theta \right) }-a\right\vert }}.
\label{eq51}
\end{equation}
Now let $\varphi \rightarrow \varphi +\theta $, and using $2\pi $
periodicity of the integrand, we get
\begin{equation}
\int\limits_{0}^{2\pi }{\dfrac{r_{0}d\varphi }{\left\vert r_{0}e^{i\left(
\varphi -\theta \right) }-a\right\vert }=}\int\limits_{-\theta }^{2\pi
-\theta }{\dfrac{r_{0}d\varphi }{\left\vert r_{0}e^{i\varphi }-a\right\vert }=}\int\limits_{0}^{2\pi }
\dfrac{r_{0}d\varphi }{\sqrt{r_{0}^{2}-2ar_{0}\cos
\left( \varphi \right) +a^{2}}}.
\label{eq52}
\end{equation}
Then from symmetry of the integrand about $\varphi =\pi $, followed by using the identity $\cos(\varphi) = 1-2 \sin^{2}(\tau) $ where $\tau =\varphi /2,$ we obtain 
\begin{multline}
\int\limits_{0}^{2\pi }{\dfrac{r_{0}d\varphi }{\sqrt{r_{0}^{2}-2ar_{0}\cos
\left( \varphi \right) +a^{2}}}=2}\int\limits_{0}^{\pi }
\dfrac{r_{0}d\varphi }{\sqrt{r_{0}^{2}+2ar_{0}\cos \left( \varphi \right) +a^{2}}}
\\ 
=2\int\limits_{0}^{\pi}\dfrac{r_{0}d\varphi }{\sqrt{\left(a+r_{0}\right) ^{2}-4ar_{0}\sin ^{2}\left( \frac{1}{2}\varphi \right) }}=4
\int\limits_{0}^{\pi /2}{\dfrac{r_{0}d\tau }{\sqrt{\left( a+r_{0}\right)
^{2}-4ar_{0}\sin ^{2}(\tau) }}}.
\label{eq53}
\end{multline}
The result then follows from (\ref{Kelliptic}) - (\ref{eq53}) and recalling
that $a=\left\vert z-z_{0}\right\vert $.
\end{proof}

We now bound terms appearing in \Cref{thm:main1} on $\Gamma $ and on certain paths containing parts of this loop. Firstly, let $\gamma _{j,l}$ be the union of part of the loop $\Gamma $ that lies in $T_{j,l}$ ($j,l\in \left\{ 0,1,-1\right\} ,j\neq l$) with an arbitrarily chosen progressive path in $T_{j}$ connecting $\Gamma $ to $z^{(j)}$ (if possible a straight line). There are six of these paths to consider. See \Cref{fig:fig2,fig:fig3} for examples with $\mathrm{Re}\, z \geq 0$, $u >0$, $z_{0}>0$, $z^{(0)}$ an admissible pole at the origin, and $z^{(1)}$ at infinity.

\begin{figure} [htbp]
  \centering
  \includegraphics{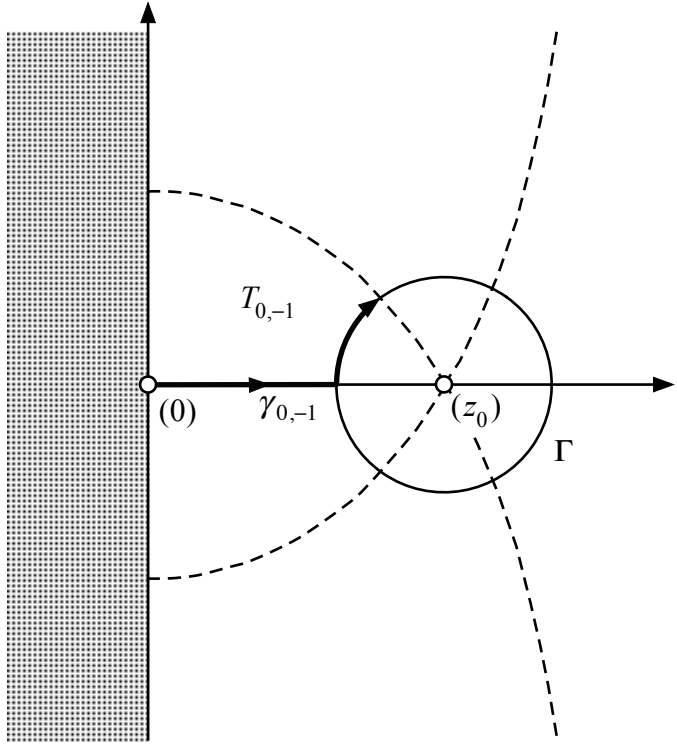}
  \caption{Path $\gamma _{0,-1}$ in the $z$ plane.}
  \label{fig:fig2}
\end{figure}

\begin{figure} [htbp]
  \centering
  \includegraphics{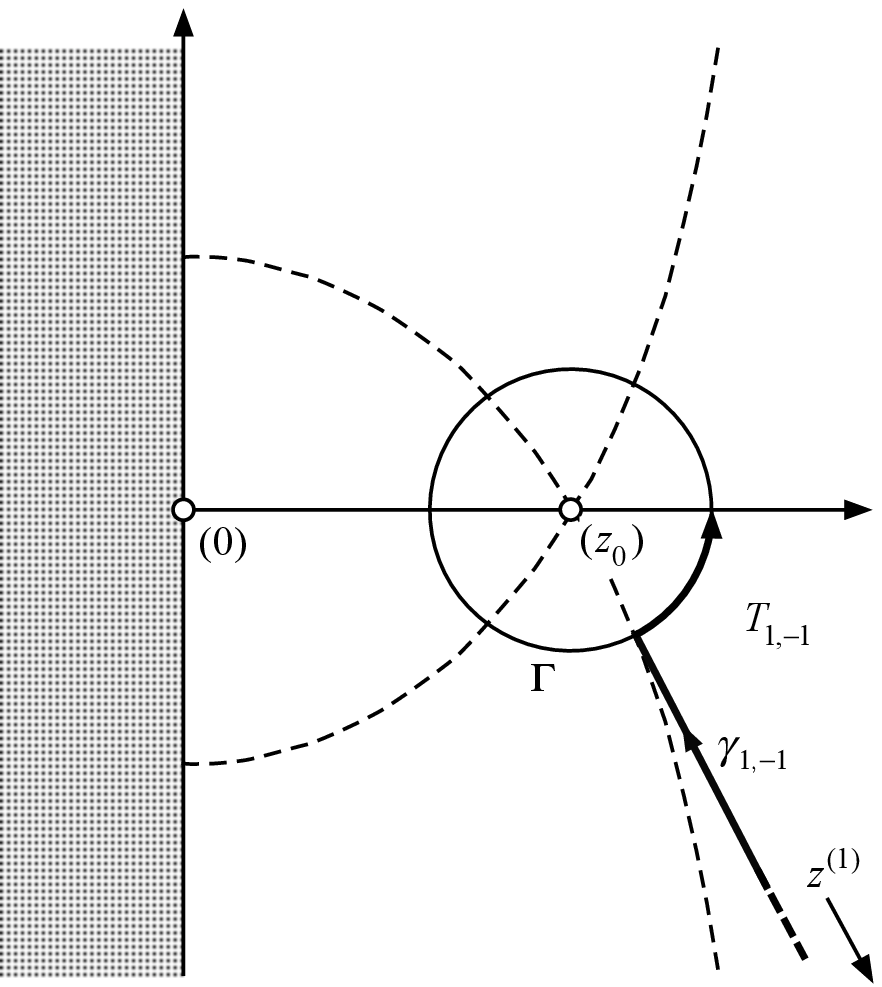}
  \caption{Path $\gamma _{1,-1}$ in the $z$ plane.}
  \label{fig:fig3}
\end{figure}

We then define
\begin{multline}
\omega _{n}(u) =2\max_{j,l} \left\{ \int_{\gamma _{j,l}}{\left\vert{\hat{F}_{n}(t) f^{1/2}(t) dt}\right\vert }\right\} 
\\ 
+\sum\limits_{s=1}^{n-1}\dfrac{1}{{|u| ^{s}}}{
\sum\limits_{k=s}^{n-1}\max_{j,l} }\left\{ {\int_{\gamma _{j,l}}{\left\vert {{\hat{F}_{k}(t) \hat{F}_{s+n-k-1}(t) }f^{1/2}(t) dt}\right\vert }}\right\},
\label{eq68}
\end{multline}
and likewise 
\begin{equation}
\varpi _{n}(u) =4\sum\limits_{s=0}^{n-2}\frac{1}{{\left\vert
u\right\vert ^{s}}}{\max_{j,l} }\left\{ {\int_{\gamma_{j,l}}{\left\vert {\hat{F}
_{s+1}(t) f^{1/2}(t) dt}\right\vert }}\right\},
\label{eq69}
\end{equation}
where the maxima are taken over all six paths $\gamma _{j,l}$. 

We next define
\begin{equation}
{\delta }_{n}(u) =\max_{j=\pm 1}\left\vert {\delta }_{n,j}\left(u\right) \right\vert   \label{deltan},
\end{equation}
\begin{equation}
\Upsilon =\underset{z\in\Gamma}{{\inf }}\left\vert \zeta f(z)
\right\vert ^{1/4},\ \tilde{\Upsilon}=\underset{z\in\Gamma}{\sup }\left\vert
\zeta /f(z) \right\vert ^{1/4},
\label{zeds}
\end{equation}
and
\begin{equation}
\rho =\underset{z\in\Gamma}{\inf }|\xi|.
\label{rho}
\end{equation}

Let $\theta =\arg(u)$ we further define
\begin{equation}
M_{s}=\underset{z\in\Gamma}{{\sup }}\,\mathrm{Re}\left\{ e^{-is\theta }\mathcal{E}
_{s}(z) \right\} ,\ N_{s}=\underset{z\in\Gamma}{{\sup }}\,\mathrm{Re}
\left\{ {(-1) ^{s}}e^{-is\theta }\mathcal{E}_{s}(z) \right\},
\label{eq96}
\end{equation}
and likewise $\tilde{M}_{s}$ and $\tilde{N}_{s}$ where $\mathcal{E}_{s}$ is replaced by $\tilde{\mathcal{E}}_{s}$.

From these definitions we note that on the contour $\Gamma $
\begin{equation}
\omega _{n,j}(u,z) \leq \omega _{n}(u) ,\ \varpi _{n,j}(u,z) \leq \varpi _{n}(u) \ \left( j=0,\pm 1\right),
\label{eq96a}
\end{equation}
\begin{equation}
\left\vert {\exp \left\{ \sum\limits_{s=n}^{n-1}{\dfrac{\mathcal{E}
_{s}(z) }{u^{s}}}\right\} }\right\vert \leq \exp \left\{
\sum\limits_{s=n}^{n-1}{\dfrac{M_{s}}{|u| ^{s}}}
\right\},
\label{eq97}
\end{equation}
and
\begin{equation}
\left\vert {\exp \left\{ \sum\limits_{s=n}^{n-1}{(-1) ^{s}
\dfrac{\mathcal{E}_{s}(z) }{u^{s}}}\right\} }\right\vert \leq
\exp \left\{ \sum\limits_{s=n}^{n-1}{\dfrac{N_{s}}{|u|
^{s}}}\right\}.
\label{eq98}
\end{equation}

Next we define 
\begin{multline}
d_{2m+2}(u) \\ =\left[ \exp \left\{ \sum\limits_{s=1}^{2m+1}{
\dfrac{M_{s}}{|u| ^{s}}}\right\} +\exp \left\{\sum\limits_{s=1}^{2m+1}{\dfrac{N_{s}}{|u| ^{s}}}
\right\} \right]  e_{2m+2}(u) \left\{ 1+\dfrac{e_{2m+2}(u) 
}{2{|u| ^{2m+2}}}\right\} ^{2},  
\label{d2m+1}
\end{multline}
where $e_{n}(u) ={\mathcal{O}}(1) $ as $
u\rightarrow \infty $ and is given by 
\begin{multline}
e_{n}(u) ={|u| ^{n}}{\delta }_{n}\left(
u\right) +\omega _{n}(u) \exp \left\{ {|u|
^{-1}\varpi _{n}(u) +|u| ^{-n}\omega
_{n}(u) }\right\}  \\
+\gamma _{n}(u,\rho) \exp \left\{ {|u|
^{-1}\beta _{n}(u,\rho) +|u| ^{-n}\gamma
_{n}(u,\rho) }\right\} .  
\label{en}
\end{multline}
Recall $\gamma _{n}(u,\xi) $ is given by (\ref{eq28}), and $\beta _{n}(u,\xi)$ is given by (\ref{eq29}).

Similarly we define 
\begin{multline}
\tilde{d}_{2m+2}(u) \\ =\left[ \exp \left\{ \sum
\limits_{s=1}^{2m+1}\dfrac{\tilde{M}_{s}}{|u| ^{s}}
\right\} +\exp \left\{\sum\limits_{s=1}^{2m+1}\dfrac{\tilde{N}_{s}}{|u| ^{s}}\right\} \right]  \tilde{e}_{2m+2}(u) \left\{ 1+\dfrac{\tilde{e}
_{2m+2}(u) }{2{|u| ^{2m+2}}}\right\} ^{2} ,
\label{d2m+2}
\end{multline}
where $\tilde{e}_{n}(u) ={\mathcal{O}}(1) $ as $
u\rightarrow \infty $ and is given by 
\begin{multline}
\tilde{e}_{n}(u) ={|u| ^{n}} {\ \delta }
_{n}(u) +\omega _{n}(u) \exp \left\{ {\left\vert
u\right\vert ^{-1}\varpi _{n}(u) +|u|
^{-n}\omega _{n}(u) }\right\} \\
+\tilde{\gamma}_{n}(u,\rho) \exp \left\{ {\left\vert
u\right\vert ^{-1}\tilde{\beta}_{n}(u,\rho) +\left\vert
u\right\vert ^{-n}\tilde{\gamma}_{n}(u,\rho) }\right\},
\label{entilda}
\end{multline}
in which ${\tilde{\gamma}}_{n}(u,\xi) $ and $\tilde{\beta}_{n}(u,\xi)$ are given by (\ref{eq28a}) and (\ref{eq29a}), respectively.

We now present the main result of this section.
\begin{theorem}
\label{thm:main2}
Assume \Cref{hyp} and let $\Gamma $ be the circle as described at the beginning of this section, with $z$ lying in its interior. Three solutions of (\ref{eq1}) are then given by (\ref{solutions}) where

\begin{multline}
\mathcal{A}_{2m+2}(u,z) =\dfrac{1}{2\pi i}
\oint_{\left\vert t-z_{0}\right\vert =r_{0}}
\exp \left\{ \sum\limits_{s=1}^{m}\dfrac{\tilde{
\mathcal{E}}_{2s}(t) }{u^{2s}}\right\} 
\\ 
\times \cosh 
\left\{ \sum\limits_{s=0}^{m} \dfrac{\tilde{\mathcal{E}}_{2s+1}(t) }{u^{2s+1}}\right\}  
\left\{ \dfrac{\zeta (t) }{f(t) }\right\}
^{1/4}\dfrac{dt}{t-z}+\dfrac{1}{2}\tilde{\kappa}_{2m+2}(u,z),
\label{m02}
\end{multline}
and 
\begin{multline}
\mathcal{B}_{2m+2}(u,z) =\dfrac{1}{2\pi iu^{1/3}}
\oint_{\left\vert t-z_{0}\right\vert =r_{0}}
\exp \left\{ 
\sum\limits_{s=1}^{m}\dfrac{\mathcal{E}_{2s}(t) }{u^{2s}}
\right\}  
\\ 
\times \sinh \left\{\sum\limits_{s=0}^{m} \dfrac{\mathcal{E}_{2s+1}(t) }{u^{2s+1}}\right\} 
\dfrac{dt}{\left\{ \zeta (t) f(t) \right\}^{1/4}
\left( t-z\right)}+\dfrac{\kappa _{2m+2}(u,z) }{2u^{1/3}},
\label{m01}
\end{multline}
such that
\begin{equation}
\left\vert \tilde{\kappa}_{2m+2}(u,z) \right\vert \leq \dfrac{
\tilde{\Upsilon}\tilde{d}_{2m+2}(u) l_{0}(z) }{2\pi
|u| ^{2m+2}},
\label{AkappaBound}
\end{equation}
and
\begin{equation}
\left\vert \kappa _{2m+2}(u,z) \right\vert \leq \dfrac{
d_{2m+2}(u) l_{0}(z) }{2\pi \Upsilon \left\vert
u\right\vert ^{2m+2}}.
\label{BkappaBound}
\end{equation}
\end{theorem}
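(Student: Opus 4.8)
The plan is to exploit the fact that $\mathcal{A}_{2m+2}(u,z)$ and $\mathcal{B}_{2m+2}(u,z)$, defined in (\ref{eq47}) and (\ref{eq48}), are analytic functions of $z$ throughout the disk bounded by $\Gamma$. This analyticity is inherited from the single-valuedness of the three solutions $W_j$ near $\zeta=0$ noted in the remark following \Cref{thm:2.1}, and was used in \cite{Dunster:2017:COA} for precisely this Cauchy-integral representation. Since $z$ lies interior to $\Gamma$, Cauchy's formula gives $\mathcal{A}_{2m+2}(u,z)=(2\pi i)^{-1}\oint_{\Gamma}\mathcal{A}_{2m+2}(u,t)(t-z)^{-1}\,dt$, and likewise for $\mathcal{B}_{2m+2}$. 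First I would insert the decomposition (\ref{Ascript}) of $\mathcal{A}_{2m+2}(u,t)$ into this integral, separating the slowly varying $\cosh$-term from the error contribution $\tfrac12\{\zeta/f\}^{1/4}\tilde\varepsilon_{2m+2}(u,t)$. The first piece reproduces verbatim the explicit integral in (\ref{m02}), while the second \emph{defines} $\tilde\kappa_{2m+2}(u,z)$; applying the same splitting to (\ref{Bscript})---carrying along the $u^{1/3}$ and $\{\zeta f\}^{1/4}$ factors, which cancel against those in (\ref{m01})---yields (\ref{m01}) and defines $\kappa_{2m+2}(u,z)$.

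It then remains to estimate the two error integrals. Passing absolute values inside, I would majorize $|\tilde\varepsilon_{2m+2}(u,t)|$ and $|\varepsilon_{2m+2}(u,t)|$ uniformly on $\Gamma$ by specializing the pointwise bounds (\ref{Aerror}) and (\ref{Berror}). The exponential prefactors are controlled by (\ref{eq96})--(\ref{eq98}), which replace $\mathrm{Re}\{\tilde{\mathcal{E}}_s/u^s\}$ by $\tilde M_s/|u|^s$ and its signed counterpart by $\tilde N_s/|u|^s$ (and analogously $M_s,N_s$ in the $\mathcal{B}$ case). The remaining factors $\tilde e_{2m+2,j}(u,z)$ and $e_{2m+2,j}(u,z)$ of (\ref{eq103}) and (\ref{eq100}) are then majorized term by term by $\tilde e_{2m+2}(u)$ and $e_{2m+2}(u)$ of (\ref{entilda}) and (\ref{en}): one uses $|\delta_{n,j}(u)|\le\delta_n(u)$ from (\ref{deltan}), the contour inequalities $\omega_{n,j}\le\omega_n$ and $\varpi_{n,j}\le\varpi_n$ from (\ref{eq96a}), and the observation that $\gamma_n,\tilde\gamma_n,\beta_n,\tilde\beta_n$ are non-increasing in $|\xi|$ by their explicit forms (\ref{eq28})--(\ref{eq29a}), so that on $\Gamma$, where $|\xi|\ge\rho$ by (\ref{rho}), they are dominated by their values at $\xi=\rho$. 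Collecting these exactly reconstitutes (\ref{d2m+2}) and (\ref{d2m+1}), giving $|\tilde\varepsilon_{2m+2}(u,t)|\le\tilde d_{2m+2}(u)|u|^{-2m-2}$ and $|\varepsilon_{2m+2}(u,t)|\le d_{2m+2}(u)|u|^{-2m-2}$ for all $t\in\Gamma$.

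Finally I would pull these uniform constants outside the integrals, bound the analytic prefactors by $|\zeta/f|^{1/4}\le\tilde\Upsilon$ and $|\zeta f|^{-1/4}\le\Upsilon^{-1}$ from (\ref{zeds}), and apply the lemma (\ref{eq50}) to evaluate $\oint_{\Gamma}|dt/(t-z)|=l_0(z)$; this delivers (\ref{AkappaBound}) and (\ref{BkappaBound}) exactly. I expect the main obstacle to lie not in this final assembly but in the uniformization of the second step: the pointwise bounds (\ref{Aerror}), (\ref{Berror}) are tied to a particular progressive path $\hat{\mathcal{L}}_j$ reaching $z$, whereas on $\Gamma$ one must dominate the error simultaneously for every $t$ and for all six admissible path-classes $\gamma_{j,l}$, which is why the contour quantities (\ref{eq68}), (\ref{eq69}) are built from a maximum over these paths. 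Verifying that this maximization, together with the $|\xi|\to\rho$ monotonicity of the Airy-error terms, produces a single majorant valid around the entire loop is the delicate bookkeeping; once it is in place, and granting the analyticity that underlies the Cauchy representation, the estimate is a direct consequence of \Cref{thm:main1} and the elementary contour-length lemma.
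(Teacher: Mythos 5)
Your proposal is correct and follows essentially the same route as the paper's own proof: Cauchy's integral formula applied to the analytic coefficient functions $\mathcal{A}_{2m+2}$ and $\mathcal{B}_{2m+2}$, identification of $\tilde{\kappa}_{2m+2}$ and $\kappa_{2m+2}$ as contour integrals of the error terms $\tilde{\varepsilon}_{2m+2}$ and $\varepsilon_{2m+2}$ (which need not be analytic at the turning point, since only their values on $\Gamma$ enter), followed by uniform majorization on $\Gamma$ using (\ref{eq96a}), the monotonicity of $\gamma_{n},\beta_{n},\tilde{\gamma}_{n},\tilde{\beta}_{n}$ in $|\xi|\geq\rho$, the constants $\Upsilon,\tilde{\Upsilon}$, and the lemma (\ref{eq50}). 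Your closing remark about uniformizing the path-dependent bounds over all six classes $\gamma_{j,l}$ is precisely what the definitions (\ref{eq68}) and (\ref{eq69}) are built to handle, and is resolved exactly as you anticipate.
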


\begin{proof}
Consider (\ref{m01}). Since $\mathcal{B}_{2m+2}(u,z)$ is analytic on and inside $\Gamma$ we have by Cauchy's integral formula
\begin{equation}
\mathcal{B}_{2m+2}(u,z) =\dfrac{1}{2\pi i}\oint_{\left\vert
t-z_{0}\right\vert =r_{0}}{\dfrac{\mathcal{B}_{2m+2}\left( {u,t}\right) dt}{
\ t-z }}.
  \label{BCauchy}
\end{equation}
On substituting (\ref{eq56b}) into the integrand of (\ref{BCauchy}) and then comparing with (\ref{m01}) we deduce that
\begin{equation}
\kappa _{2m+2}(u,z) =\dfrac{1}{2\pi i}\oint_{\left\vert
t-z_{0}\right\vert =r_{0}}{\dfrac{\varepsilon _{2m+2}(u,t) dt}{
\left\{ \zeta (t) f(t) \right\} ^{1/4}\left(t-z\right) }},
\label{Bkappa}
\end{equation}
(even though $\varepsilon _{2m+2}(u,z)$ is not analytic at the turning point). Therefore from the definition of $\Gamma$ we have from (\ref{eq50}), (\ref{zeds}) and (\ref{Bkappa})
\begin{multline}
\left\vert \kappa _{2m+2}(u,z) \right\vert \leq \dfrac{
\sup_{z\in \Gamma}\left\vert \varepsilon _{2m+2}(u,z) \right\vert }{2\pi \inf_{z\in \Gamma}\left\vert \zeta (z) f(z)
\right\vert ^{1/4}}\oint_{\left\vert t-z_{0}\right\vert =r_{0}}\left\vert {
\dfrac{dt}{t-z}}\right\vert  \\ 
=\dfrac{\sup_{z\in \Gamma}\left\vert \varepsilon _{2m+2}(u,z)
\right\vert l_{0}(z) }{2\pi \Upsilon }.
\label{eq94}
\end{multline}
Now for $z\in\Gamma $ we have from (\ref{rho}) that $|\xi|
\geq \rho $ and hence from (\ref{eq28}) and (\ref{eq29})

\begin{equation}
{\beta _{n}(u,\xi) \leq \beta _{n}(u,\rho) ,\ }
\gamma _{n}(u,\xi) \leq \gamma _{n}(u,\rho).
\label{eq95}
\end{equation}
Thus from (\ref{eq100}), (\ref{deltan}), (\ref{eq96a}) and (\ref{en}) we
have $e_{n,j}(u,z) \leq e_{n}(u) $ for $z\in\Gamma$ and $j=0,\pm 1$. Hence (\ref{BkappaBound}) follows from (\ref{Berror}), (\ref{eq97}), (\ref{eq98}), (\ref{d2m+1}) and (\ref{eq94}). The bound (\ref{AkappaBound}) is similarly proved.
\end{proof}

\section{Bessel functions of large order}
\label{sec5}
We illustrate our new error bounds in an application to Airy expansions for Bessel functions, therefore providing error bounds
for the uniform asymptotic expansions obtained in \cite{Dunster:2017:COA}. For a classical monograph on Bessel functions 
see \cite{Watson:1995:ATO}. 
See also \cite{NIST:DLMF}, chapters 9 and 10, for a compendium of important properties of these functions. 
Similar ideas can be used for bounding the errors
in other cases as for example for Laguerre polynomials and Kummer functions \cite{Dunster:2018:USE}. 

The first step for Bessel functions 
is to apply the Liouville transformations described in \S 1 to Bessel's equation. To this end, we first note that
functions $w=z^{1/2}J_{\nu }(\nu z) $, $w=z^{1/2}H_{\nu
}^{(1) }(\nu z) $ and $w=z^{1/2}H_{\nu }^{\left(
2\right) }(\nu z) $ satisfy 
\begin{equation}
\frac{d^{2}w}{dz^{2}}=\left\{ {\nu ^{2}\frac{1-z^{2}}{z^{2}}-\frac{1}{4z^{2}}
}\right\} w.
\end{equation}
Here $z$ is real or complex, and  $\nu $ plays the role of our parameter $u$, which we assume is real and positive. 
On comparing with (\ref{eq1}) we have 
\begin{equation} f(z)=\frac{1-z^{2}}{z^{2}},\,g(z)=-\frac{1}{4z^{2}}.
\label{fandg}
\end{equation}

For brevity we only consider case \S 3, i.e. $z$ bounded away from the turning point $z_{0}=1$. In a subsequent paper we shall show how our error bounds can be sharpened, including those of \S 4 near the turning point.

The Liouville transformation is 
\begin{equation}
\xi =\frac{2}{3}\zeta ^{3/2}=\ln \left\{ {\frac{1+\left( {1-z^{2}}\right)^{1/2}}{z}}\right\} -\left( {1-z^{2}}\right) ^{1/2},
\label{xiBessel}
\end{equation}
and 
\begin{equation}
W=\left( \frac{1-z^{2}}{\zeta z^2}\right) ^{1/4}w.
\end{equation}
The transformed variable $\zeta $ is real for real $z\in (0,1)$ ($\zeta \in (0,+\infty )$), and $\zeta (z)$ can be defined by analytic continuation in
the whole complex plane cut along the negative real axis. $\xi $ is positive for $z\in (0,1)$ and defined continuously elsewhere.

We then obtain (\ref{eq3}) where 
\begin{equation}
\psi (\zeta) =\frac{5}{16\zeta ^{2}}+\frac{\zeta z^{2}\left( {
z^{2}+4}\right) }{4\left( {z^{2}-1}\right) ^{3}}.
\label{schwarz}
\end{equation}

We find from (\ref{eq5}), (\ref{eq8}) - (\ref{eq7}), and (\ref{fandg}) that the coefficients are given by 
\begin{equation}
\hat{E}_{s}(z) =\int_{z}^{\infty }t^{-1}\left( {1-t^{2}}
\right) ^{1/2}\hat{F}{(t) dt}\quad \left( {s=1,2,3,\cdots }
\right) ,
\end{equation}
Here 
\begin{equation}
\hat{F}_{1}(z)=\frac{{z^{2}(z^{2}+4)}}{{8(z^{2}-1)^{3}}},\,\hat{F}
_{2}(z)=\frac{{z}}{{2}\left( 1-z^{2}\right) ^{1/2}}\hat{F}_{1}^{\prime }(z),
\end{equation}
and
\begin{equation}
\hat{F}_{s+1}(z)=\frac{{z}}{{2}\left( 1-z^{2}\right) ^{1/2}}\hat{F}
_{s}^{\prime }(z)-\frac{1}{2}\sum_{j=1}^{s-1}\hat{F}_{j}(z)\hat{F}_{s-j}(z)\quad \left( {s=2,3,\cdots }\right).  
\label{recuFs}
\end{equation}

As shown in \cite{Dunster:2017:COA} these coefficients can be explicitly computed, and in particular they have the form 
\begin{equation}
\hat{E}_{s}(z)=\frac{{P_{s}(z^{2})}}{{(1-z^{2})^{3s/2}}},  
\label{coebe}
\end{equation}
where $P_{s}(z)$ are polynomials of degree $s$ in $z$.

We note for the odd terms that
\begin{equation}
\hat{E}_{2j+1}(z)=\frac{1}{\left( 1-z\right) ^{1/2}}\left[ \frac{{
P_{2j+1}(z^{2})}}{{(1-z^{2})^{3j+1}}\left( 1+z\right) ^{1/2}}\right] \
\left( j=0,1,2\cdots \right) ,
\end{equation}
where the term in the square brackets is meromorphic at $z=1$ as desired.

The polynomials $P_{s}$ in (\ref{coebe}) have the properties
\begin{equation}
P_{2s}(0)=0,\,P_{2s+1}(0)=C_{2s+1},
\end{equation}
where $C_{2s+1}$ are the coefficients in the Stirling asymptotic series 
\begin{equation}
\Gamma ({\nu })\sim \left( 2\pi \right) ^{1/2}e^{-{\nu }}{\nu }^{{\nu -(1/2)}
}\exp \left\{ \sum_{j=0}^{\infty }\frac{{C_{2j+1}}}{{\nu ^{2j+1}}}\right\}
\ ({\nu }\rightarrow \infty).
\label{stirling}
\end{equation}
Defining $C_{2j}=0$ ($j=1,2,3,\cdots $) we then have 
\begin{equation}
\hat{E}_{s}\left( {z^{(0) }}\right) =\hat{E}_{s}\left( {0}
\right) ={C_{s}},
\label{ez0}
\end{equation}
and from (\ref{coebe}) 
\begin{equation}
\hat{E}_{s}\left( {z^{(\pm 1) }}\right) =\hat{E}_{s}\left( 
{\mp i\infty }\right) =0.
\label{ez1}
\end{equation}

Next, from (\ref{eq10}) and (\ref{eq11}), the following asymptotic solutions are obtained
\begin{equation}
W_{0}(\nu,\zeta) =\frac{1}{\zeta ^{1/4}}\exp \left\{ -\nu
\xi +\sum\limits_{s=1}^{n-1}{(-1) ^{s}\frac{\hat{E}
_{s}(z) -C_{s}}{\nu ^{s}}}\right\} \left\{ 1+\eta _{n,0}(\nu,z) \right\} ,
\end{equation}
and 
\begin{equation}
W_{\pm 1}(\nu,\zeta) =\frac{1}{\zeta ^{1/4}}\exp \left\{ \nu \xi +\sum\limits_{s=1}^{n-1}{\frac{\hat{E}_{s}(z) }{\nu ^{s}}}\right\} \left\{ 1+\eta _{n,\pm 1}(\nu,z) \right\} .
\end{equation}

Let us now match these with the corresponding Bessel functions having the same recessive behavior at the singularities. Firstly, for the one recessive at $z=0$, we note as $z\rightarrow 0$ that
\begin{equation}
J_{\nu }(\nu z) \sim 
\frac{1}{\Gamma(\nu +1) }
\left( \frac{\nu z}{2}\right) ^{\nu } ,
\end{equation}
and hence using
\begin{equation}
\xi = \ln \left( 2/z\right) -1 +\mathcal{O}(z),
\end{equation}
we deduce that 
\begin{equation}
J_{\nu }(\nu z) =\frac{{\nu }^{\nu }}{e^{\nu }\Gamma
(\nu +1) }\left(\frac{\zeta }{1-z^{2}}\right)
^{1/4}W_{0}(\nu,\zeta).
\label{JW0}
\end{equation}

Next, for the solution that vanishes as $z\rightarrow i\infty $,  we use
\begin{equation}
H_{\nu }^{(1) }(\nu z) \sim \left( \frac{2}{\pi {
\nu z}}\right) ^{1/2}\exp \left\{ i{\nu z-}\frac{1}{2}\nu \pi i-\frac{1}{4}{\pi i}\right\} ,
\end{equation}
along with
\begin{equation}
\xi =iz-\tfrac{1}{2}\pi i+\mathcal{O}\left( z^{-1}\right) ,
\label{xi0}
\end{equation}
and we arrive at the identification
\begin{equation}
H_{\nu }^{(1) }(\nu z) =-i\left( \frac{2}{\pi {\nu 
}}\right) ^{1/2}\left( {\frac{\zeta }{1-z^{2}}}\right) ^{1/4}W_{-1}(\nu,\zeta) .
\label{HWm1}
\end{equation}
We similarly find that
\begin{equation}
H_{\nu }^{(2)}(\nu z) =i\left( \frac{2}{\pi {\nu }
}\right) ^{1/2}\left( {\frac{\zeta }{1-z^{2}}}\right) ^{1/4}W_{1}(\nu ,\zeta) .
\end{equation}

We now plug these into the general connection formula (\ref{eq15}), and this yields
\begin{equation}
\lambda _{-1}H_{\nu }^{(1) }(\nu z) =\left( \frac{2
}{\pi {\nu }}\right) ^{1/2}\frac{e^{{\nu }}\Gamma 
(\nu +1) }{{\nu }^{\nu }}J_{\nu }(\nu z) -\lambda _{1}H_{\nu }^{(2) }(\nu z) .
\end{equation}
On comparing this with the well-known connection formula for Bessel functions
\begin{equation}
J_{\nu }(\nu z) =\tfrac{1}{2}\left\{ H_{\nu }^{(1)
}(\nu z) +H_{\nu }^{(2) }(\nu z)
\right\} ,
\end{equation}
we deduce that
\begin{equation}
\lambda _{1}=\lambda _{-1}=\left( \frac{1}{2\pi {\nu }}\right) ^{1/2}\frac{
e^{{\nu }}\Gamma (\nu +1) }{{\nu }^{{\nu }}}.
\label{lambdaBessel}
\end{equation}

We note from (\ref{stirling}) that
\begin{equation}
\lambda _{\pm 1}\sim \exp \left\{ \sum\limits_{j=0}^{\infty }\frac{{
C_{2j+1}}}{{\nu }^{2j+1}}\right\} \\ \left( {\nu }\rightarrow \infty \right) ,
\end{equation}
in accord with (\ref{eq22}), (\ref{ez0}) and (\ref{ez1}).

For $z\in T_{0,-1}\cup T_{-1,0}$ (see \Cref{fig:fig1}) we use (\ref{eq42}), (\ref{eq43}), (\ref{JW0}), (\ref{HWm1}) and (\ref{lambdaBessel}) to obtain the exact expressions
\begin{multline}
A(\nu,z) =\frac{\pi ^{1/2}
e^{{\nu }}\Gamma (\nu +1) }{{\nu }^{{\nu+(1/6) }}}{\left( {\dfrac{1-z^{2}}{\zeta }}\right) ^{1/4}} \\ 
\times \left\{ e^{\pi i/6}\mathrm{Ai}_{-1}^{\prime }\left( {\nu ^{2/3}\zeta }
\right) J_{\nu }(\nu z) -\tfrac{1}{2}i\mathrm{Ai}^{\prime
}\left( {\nu ^{2/3}\zeta }\right) H_{\nu }^{(1) }
(\nu z) \right\} ,
\end{multline}

and
\begin{multline}
B(\nu,z) = \frac{\pi ^{1/2}
e^{{\nu }}\Gamma (\nu +1) }{{\nu }^{{\nu+(1/6) }}}{\left( {\dfrac{1-z^{2}}{\zeta }}\right) ^{1/4}} \\ 
\times \left\{ \tfrac{1}{2}i{\mathrm{Ai}\left( {\nu ^{2/3}\zeta }\right) }
H_{\nu }^{(1) }(\nu z) {-e^{\pi i/6}\mathrm{Ai}
_{-1}\left( {\nu ^{2/3}\zeta }\right) }J_{\nu }(\nu z)
\right\} ,
\end{multline}
Now from (\ref{eq46}), (\ref{eq47}), (\ref{eq48}), (\ref{fandg}) and (\ref{ez0}) we have
\begin{equation}
\mathcal{A}_{2m+2}(\nu,z) =\exp \left\{ -\sum
\limits_{j=0}^{m}\frac{{C_{2j+1}}}{{\nu }^{2j+1}}\right\} \left( \frac{
z^{2}\zeta }{1-z^{2}}\right) ^{1/4}A(\nu,z) ,
\end{equation}
and
\begin{equation}
\mathcal{B}_{2m+2}(\nu,z) =\exp \left\{ -\sum
\limits_{j=0}^{m}\frac{C_{2j+1}}{{\nu }^{2j+1}}\right\} \left( \frac{z^{2}\zeta }{1-z^{2}}\right) ^{1/4}B(\nu,z) ,
\end{equation}
and hence
\begin{multline}
\mathcal{A}_{2m+2}(\nu,z) =\pi ^{1/2}{e^{{\nu }}{\nu }^{-{\nu
+(5/6)}}\Gamma (\nu) }\exp \left\{ -\sum\limits_{j=0}^{m}{
\dfrac{{C_{2j+1}}}{{\nu }^{2j+1}}}\right\}  \\ 
\times z^{1/2}\left\{ e^{\pi i/6}\mathrm{Ai}_{-1}^{\prime }\left( {\nu^{2/3}\zeta }\right) J_{\nu }(\nu z) -\tfrac{1}{2}i\mathrm{Ai}^{\prime }\left( {\nu ^{2/3}\zeta }\right) H_{\nu }^{(1)
}(\nu z) \right\} ,
\label{eq5.32}
\end{multline}
and
\begin{multline}
\mathcal{B}_{2m+2}(\nu,z) =\pi ^{1/2}e^{{\nu }}{{\nu }^{-{\nu
+(5/6)}}}\Gamma (\nu) \exp \left\{ -\sum\limits_{j=0}^{m}{
\dfrac{{C_{2j+1}}}{{\nu }^{2j+1}}}\right\}  \\ 
\times z^{1/2}\left\{ \tfrac{1}{2}i{\mathrm{Ai}\left( {\nu ^{2/3}\zeta }
\right) }H_{\nu }^{(1) }(\nu z) {-e^{\pi i/6}\mathrm{
Ai}_{-1}\left( {\nu ^{2/3}\zeta }\right) }J_{\nu }(\nu z)
\right\} .
\label{eq5.33}
\end{multline}
These are exact expressions, and can be used to compare numerically the coefficient functions with their approximations, and in particular the exact errors with our bounds (see below).

Next, we have from an application of \Cref{thm:main1}
\begin{multline}
\mathcal{A}_{2m+2}(\nu,z) \\ =\left( {\dfrac{z^{2}\zeta }{1-z^{2}
}}\right) ^{1/4}  \left[ {\exp \left\{ \sum\limits_{s=1}^{m}{\dfrac{\mathcal{\tilde{E}}
_{2s}(z) }{{\nu }^{2s}}}\right\} \cosh \left\{ 
\sum\limits_{s=0}^{m}{\dfrac{\mathcal{\tilde{E}}_{2s+1}(z) }{{
\nu }^{2s+1}}}\right\} }+\frac{1}{2}\tilde{\varepsilon}_{2m+2}(\nu,z)
\right] ,
\label{eq5.34}
\end{multline}
and 
\begin{multline}
\mathcal{B}_{2m+2}(\nu,z) =\dfrac{1}{{\nu }^{1/3}}\left\{ {\dfrac{z^{2}}{\zeta \left( 1-z^{2}\right) }}\right\} ^{1/4} \\ \times \left[ \exp \left\{\sum\limits_{s=1}^{m}{\dfrac{\mathcal{E}
_{2s}(z) }{{\nu }^{2s}}}\right\} \sinh \left\{ 
\sum\limits_{s=0}^{m}\dfrac{\mathcal{E}_{2s+1}(z) }{\nu 
^{2s+1}}\right\} +\frac{1}{2}\varepsilon _{2m+2}(\nu,z) 
\right] ,
\label{eq5.35}
\end{multline}
where $\mathcal{E}_{s}(z) $ and $\tilde{\mathcal{E}}
_{s}(z) $ are given by (\ref{eq40}) and (\ref{eq38}), and for $\nu >0$ and $z\in T_{0,-1}\cup T_{-1,0}$
\begin{multline}
\left\vert \tilde{\varepsilon}_{2m+2}(\nu,z) \right\vert \\ \leq \dfrac{1}{{\nu ^{2m+2}}}\exp \left\{ 
\sum\limits_{s=1}^{2m+1}{\dfrac{\mathrm{Re} \, \mathcal{\tilde{E}}_{s}(z) }{{
\nu }^{s}}}\right\} \tilde{e}_{2m+2,-1}(\nu,z) \left\{ 1+
\dfrac{\tilde{e}_{2m+2,-1}(\nu,z) }{2{{\nu }^{2m+2}}}\right\}
^{2} \\ 
+\dfrac{1}{{{\nu }^{2m+2}}}\exp \left\{ \sum\limits_{s=1}^{2m+1}{(-1)^{s}\dfrac{\mathrm{Re} \, \mathcal{\tilde{E}}_{s}(z) }{{\nu }^{s}}}
\right\} \tilde{e}_{2m+2,0}(\nu,z) \left\{ 1+\dfrac{\tilde{e}
_{2m+2,0}(\nu,z) }{2{{\nu }^{2m+2}}}\right\} ^{2},
\label{eq5.36}
\end{multline}
in which (for $j=0,-1$) 
\begin{multline}
\tilde{e}_{2m+2,j}(\nu,z) ={{\nu }^{2m+2}\delta }
_{2m+2,j}(\nu)  \\ 
+\omega _{2m+2,j}(\nu,z) \exp \left\{ {\nu }^{-1}\varpi
_{2m+2,j}(\nu,z) +{\nu }^{-2m-2}\omega _{2m+2,j}(\nu ,z)   \right\}  \\ 
+\tilde{\gamma}_{2m+2}(\nu,\xi) \exp \left\{ {{\nu }^{-1}
\tilde{\beta}_{2m+2}(\nu,\xi) +{\nu }^{-2m-2}\tilde{\gamma}
_{2m+2}(\nu,\xi) }\right\} .
\end{multline}
Here ${\delta }_{2m+2,0}(\nu) =0$, and from (\ref{exactdelta}), (\ref{ez0}) and (\ref{lambdaBessel})
\begin{equation}
{\delta }_{2m+2,\pm 1}(\nu) =\left( \frac{1}{2\pi }\right)
^{1/2}\frac{e^{{\nu }}\Gamma (\nu) }{{\nu }^{{\nu -(1/2)}}}
\exp \left\{ -\sum\limits_{j=0}^{m}\frac{C_{2j+1}}{{\nu }^{2j+1}}
\right\} -1;
\end{equation}
in addition
\begin{multline}
\omega _{2m+2,0}(\nu,z) =2\int_{0}^{z}{\left\vert \dfrac{{
\hat{F}_{2m+2}(t) }\left( 1-t^{2}\right) ^{1/2}}{t}{dt}
\right\vert } \\ 
+\sum\limits_{s=1}^{2m+1}\dfrac{1}{{\nu ^{s}}}\int_{0}^{z}{{\left\vert {
\sum\limits_{k=s}^{2m+1}}\dfrac{{{\hat{F}_{k}(t) \hat{F}
_{s+2m-k+1}(t) }}\left( 1-t^{2}\right) ^{1/2}}{t}{dt}
\right\vert }},
\label{eq5.39}
\end{multline}
\begin{equation}
\varpi _{2m+2,0}(\nu,z) =4\sum\limits_{s=0}^{2m}\frac{1}{{\nu
^{s}}}\int_{0}^{z}{{\left\vert \frac{{\hat{F}_{s+1}(t) }\left(
1-t^{2}\right) ^{1/2}}{t}{dt}\right\vert }},
\label{eq5.40}
\end{equation}
and $\omega _{2m+2,-1}(\nu,z) $ and $\varpi _{2m+2,-1}
(\nu,z) $ are the same except the paths of integration are from $z$ to infinity in the upper half plane. These can be taken as straight lines in both cases, in the latter case vertical lines from $z$ to infinity in the upper half plane.

Similarly 
\begin{multline}
\left\vert \varepsilon _{2m+2}(\nu,z) \right\vert \\ \leq \dfrac{
1}{{{\nu }^{2m+2}}}\exp \left\{ \sum\limits_{s=1}^{2m+1}\dfrac{\mathrm{Re} \,\mathcal{E}
_{s}(z) }{{\nu }^{s}}\right\} e_{2m+2,-1}
(\nu,z) \left\{ 1+\dfrac{e_{2m+2,-1}(\nu,z) }{2{{\nu }^{2m+2}}
}\right\} ^{2} \\ 
+\dfrac{1}{{\nu }^{2m+2}}\exp \left\{ \sum\limits_{s=1}^{2m+1}{(-1)^{s}\dfrac{\mathrm{Re} \,\mathcal{E}_{s}(z) }{{\nu }^{s}}}\right\}
e_{2m+2,0}(\nu,z) \left\{ 1+\dfrac{e_{2m+2,0}
(\nu,z) }{2{{\nu }^{2m+2}}}\right\} ^{2},
\label{eq5.41}
\end{multline}
where 
\begin{multline}
e_{2m+2,j}(\nu,z) ={{\nu }^{2m+2}\delta }_{2m+2,j}(\nu)  \\ 
+\omega _{2m+2,j}(\nu,z) \exp \left\{ {\nu }^{-1}\varpi
_{2m+2,j}(\nu,z) +{\nu }^{-2m-2}\omega _{2m+2,j}(\nu,z) \right\}  \\ 
+\gamma _{2m+2}(\nu,\xi) \exp \left\{ {\nu }^{-1}\beta
_{2m+2}(\nu,\xi) +{\nu }^{-2m-2}\gamma _{2m+2}
(\nu,\xi) \right\} .
\end{multline}

Before proceeding with numerical computations, let us illustrate how the above asymptotic solutions can be matched with the exact solutions. We do so we consider solutions recessive at $z=0$, with the other solutions done similarly. Now, by uniqueness of such solutions we immediately deduce from the $l=0$ solution of \Cref{thm:main1} that 
\begin{equation}
J_{\nu }(\nu z) =c_{m,0}(\nu) z^{-1/2}\left\{ 
\mathrm{Ai}\left( {\nu ^{2/3}\zeta }\right) \mathcal{A}_{2m+2}\left( {\nu ,z}
\right) +\mathrm{Ai}^{\prime }\left( {\nu ^{2/3}\zeta }\right) \mathcal{B}
_{2m+2}(\nu,z) \right\} ,
\label{5.43}
\end{equation}
for some constant $c_{m,0}(\nu) $. Letting $z\rightarrow 0$ in (\ref{eq5.34}) and (\ref{eq5.35}) and referring to (\ref{xiBessel}) and (\ref{xi0})
we have 
\begin{equation}
\mathcal{A}_{2m+2}(\nu,z) \sim z^{1/2}\zeta ^{1/4}\left[
\cosh \left\{ \sum\limits_{j=0}^{m}\dfrac{C_{2j+1}}{{\nu }^{2j+1}}
\right\} +\frac{1}{2}\tilde{\varepsilon}_{2m+2}(\nu,0)
\right] ,
\label{5.44}
\end{equation}
and 
\begin{equation}
\mathcal{B}_{2m+2}(\nu,z) \sim 
\frac{z^{1/2}}{\nu^{1/3} \zeta^{1/4}}
\left[ \sinh \left\{ \sum\limits_{j=0}^{m}\dfrac{
C_{2j+1}}{{\nu }^{2j+1}}\right\} +\frac{1}{2}\varepsilon _{2m+2}(\nu,0) \right] .
\label{5.45}
\end{equation}
Although we don't know $\tilde{\varepsilon}_{2m+2}(\nu,0) $ and $\varepsilon _{2m+2}(\nu,0) $ explicitly we can bound these values.
Specifically, from the above bounds we see that 
\begin{equation}
\left\vert \tilde{\varepsilon}_{2m+2}(\nu,0) \right\vert \leq 
\dfrac{1}{{{\nu }^{2m+2}}}\exp \left\{ \sum\limits_{j=0}^{m}\frac{C
_{2j+1}}{{\nu }^{2j+1}}\right\} \tilde{e}_{2m+2,-1}(\nu ,0)
\left\{ 1+\dfrac{\tilde{e}_{2m+2,-1}(\nu,0) }{2{{\nu }^{2m+2}}
}\right\} ^{2},
\end{equation}
since $\tilde{e}_{2m+2,0}(\nu,0) =0$, and in this 
\begin{multline}
\tilde{e}_{2m+2,-1}(\nu,0) ={{\nu }^{2m+2}\delta }
_{2m+2,-1}(\nu)  \\ 
+\omega _{2m+2,-1}(\nu,0) \exp \left\{ {{\nu }^{-1}\varpi
_{2m+2,-1}(\nu,0) +{\nu }^{-2m-2}\omega _{2m+2,-1}\left( {\nu
,0}\right) }\right\} .
\end{multline}

Similarly $\varepsilon _{2m+2}(\nu,0)$ satisfies the same bound, since $e_{2m+2,0}(\nu,0) =0$ and the analogously defined $e_{2m+2,-1}(\nu,0)$ is the same as $\tilde{e}_{2m+2,-1}(\nu,0)$.

On using (\ref{5.43}) - (\ref{5.45}), (\ref{Aiinfinity}) and (\ref{eq78a}) we arrive at 
\begin{multline}
\dfrac{\left( \frac{1}{2}{\nu }\right) ^{{\nu }}}{\Gamma \left( {\nu +1}
\right) }=\dfrac{c_{m,0}(\nu) \exp \left( -{\nu }\ln \left(
2\right) +{\nu }\right) }{2\pi ^{1/2}{\nu }^{1/6}} \\ 
\times \left[ \exp \left\{ -{\sum\limits_{j=0}^{m}}\dfrac{C_{2j+1}}{{\nu }
^{2j+1}}\right\} +\frac{1}{2}\tilde{\varepsilon}_{2m+2}
(\nu,0) -\frac{1}{2}\varepsilon_{2m+2}(\nu,0) \right] ,
\end{multline}
and therefore the desired value of the proportionality constant is given by
\begin{multline}
c_{m,0}(\nu) =\dfrac{2\pi ^{1/2}{\nu }^{{\nu -(5/6)
}}}{e^{\nu }\Gamma (\nu) } \\ 
\times \left[ {\exp \left\{ -{\sum\limits_{j=0}^{m}}\dfrac{C_{2j+1}}{{\nu }
^{2j+1}}\right\} }+\frac{1}{2}\tilde{\varepsilon}_{2m+2}(\nu,0) -\frac{1}{2}\varepsilon _{2m+2}(\nu,0) \right] ^{-1}.
\end{multline}

The identification of the Hankel functions can be done similarly. We omit details.

\subsection{ Numerical examples}

Examples of the performance of the error bounds given in (\ref{eq5.36}) and (\ref{eq5.41}) are shown in Figures \ref{fig:fig4} and \ref{fig:fig5}.

\begin{figure}[h!]
  \centering
  \includegraphics[scale=0.35]{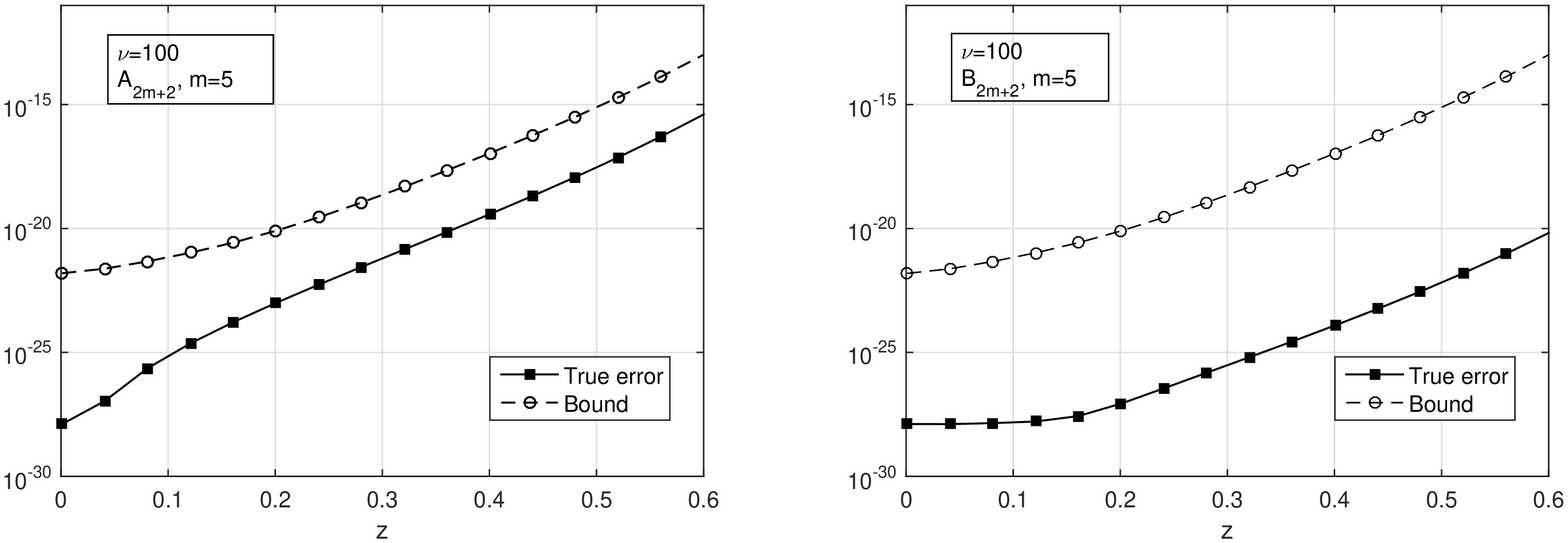}
  \caption{Comparison of the bounds given in (\ref{eq5.36}) and (\ref{eq5.41}) with the true numerical accuracy obtained when using 
(\ref{eq5.34}) and (\ref{eq5.35}) 
to approximate (\ref{eq5.32}) and (\ref{eq5.33}), respectively, for a fixed value of $m$ ($m=5$) and $\nu$ ($\nu=100$).}
  \label{fig:fig4}
\end{figure}

\begin{figure}[h]
  \centering
  \includegraphics[scale=0.35]{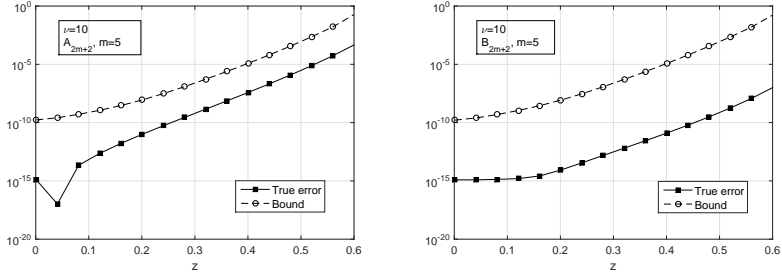}
  \caption{Comparison of the bounds given in (\ref{eq5.36}) and (\ref{eq5.41}) with the true numerical accuracy obtained 
when using (\ref{eq5.34}) and (\ref{eq5.35}) 
to approximate (\ref{eq5.32}) and (\ref{eq5.33}), respectively, for a fixed value of $m$ ($m=5$) and $\nu$ ($\nu=10$).}
  \label{fig:fig5}
\end{figure}

In the figures, these bounds are compared with the true numerical accuracy obtained when using (\ref{eq5.34}) and 
(\ref{eq5.35}) 
to approximate (\ref{eq5.32}) and (\ref{eq5.33}), respectively, for a fixed value of $m$ ($m=5$) and
two different values of $\nu$ ($\nu=10,\,100$). The computation of (\ref{eq5.32}) and (\ref{eq5.33}) is made using Maple with a large number of digits. 

For the bounds, two different types of numerical quadrature methods have been
considered to evaluate the integrals: (i) a Gauss-Legendre quadrature with 30 nodes for the integrals
in (\ref{eq5.39}) and (\ref{eq5.40}); (ii) an adaptative quadrature method over a truncated interval for the integrals for $\omega _{2m+2,-1}(\nu,z) $ and $\varpi _{2m+2,-1}(\nu,z) $. 

As can be seen in the figures, the bounds (\ref{eq5.36}) and (\ref{eq5.41}) track the exact errors quite well even
for moderate values of $\nu$. Also, the accuracy of the bound (\ref{eq5.36}) is better
than the accuracy of (\ref{eq5.41}), as expected (see \Cref{remark6}).  

\newpage

\appendix
\section{Exponential-type Liouville-Green expansions for Airy functions} 
\label{secA}
Let $\left\vert {\arg \left( {u^{2/3}\zeta }\right) }\right\vert
\leq {\frac{2}{3}}\pi $ (or equivalently, from (\ref{eq2}), $\left\vert \arg(u \xi) \right\vert \leq \pi $). Now $V=z^{1/4}\mathrm{Ai}
\left( {u^{2/3}\zeta }\right) $ satisfies 
\begin{equation}
\frac{d^{2}V}{d\xi ^{2}}=\left\{ u^{2}{-\frac{5}{36\xi ^{2}}}\right\} V.
\label{eq74}
\end{equation}
From \cite[Theorem 1.1]{Dunster:2020:LGE} where (1.14) we obtain a solution 
\begin{equation}
{V_{n}^{(0) }}(u,\xi) =\exp \left\{ 
\sum\limits_{s=1}^{n-1}{(-1) ^{s}}\frac{e_{s}(\xi) }{u^{s}}\right\} \left\{ {e^{-u\xi }+\varepsilon _{n}^{(0) }(u,\xi) }\right\},
\label{eq75}
\end{equation}
where from from (1.14) of that paper the coefficients are found to be
\begin{equation}
e_{s}(\xi) =\frac{a_{s}}{s\xi^{s}},
\label{eq80}
\end{equation}
with $a_{1}=a_{2}=\frac{5}{72}$ and subsequent terms satsfying the
recurrence relation (\ref{arec}). From this we apply \cite[(1.20)]
{Dunster:2020:LGE} to provide the bound 
\begin{multline}
\left\vert {\varepsilon _{n}^{(0) }(u,\xi) }
\right\vert \leq \left\vert u^{-n}{e^{-u\xi }}\right\vert \int_{\infty
}^{\xi }{\left\vert {\chi _{n}^{(0) }(u,t) dt}
\right\vert } \\ 
\times \exp \left\{ 4\int_{\infty }^{\xi }{\left\vert u^{-1}{T_{n}^{\left(
0\right) }(u,t) dt}\right\vert }+\int_{\infty }^{\xi }{
\left\vert u^{-n}{\chi _{n}^{(0) }(u,t) dt}
\right\vert }\right\},
\label{eq79}
\end{multline}
where 
\begin{equation}
\chi _{n}^{(0) }(u,\xi) =\frac{(-1)
^{n+1}}{\xi ^{n+1}}\left\{ 2a_{n}+\sum\limits_{s=1}^{n-1}\frac{
(-1)^{s+1}}{(u \xi)^{s}}\sum\limits_{k=s}^{n-1}{
a_{k}a_{s+n-k-1}}\right\},
\label{eq81}
\end{equation}
and
\begin{equation}
T_{n}^{(0) }(u,\xi) =\sum\limits_{s=0}^{n-2}(-1) ^{s}\frac{a_{s+1}}{u^{s}\xi ^{s+2}}.
\label{eq81a}
\end{equation}
In (\ref{eq79}) the paths for the three integrals is the ray
\begin{equation}
t=\tau \xi \ \left( {1\leq \tau <\infty }\right),
\label{eq86}
\end{equation}
if $\mathrm{Re}(u\xi) \geq 0$, and the ray 
\begin{equation}
t=\xi \mp i\xi \tau \ \left( {0\leq \tau <\infty }\right),
\label{eq87}
\end{equation}
if $\mathrm{Re}(u\xi) <0$ and $\pm \mathrm{Im}(u \xi) \geq 0$. These are chosen as the simplest paths satisfying the requirement that $\mathrm{Re}(ut)$ is increasing as $t$ runs along the path from $\xi $ to $\infty $,\ for each fixed $\xi $ in the cut plane $\left\vert \arg (u \xi) \right\vert \leq \pi $.
\begin{lemma}
For $p\geq 2$ 
\begin{equation}
\int_{\infty }^{\xi }{\left\vert \dfrac{dt}{t^{p}}\right\vert }=\dfrac{
\Lambda _{p}(u \xi) }{|\xi| ^{p-1}},
\label{eq85}
\end{equation}
where 
\begin{equation}
\Lambda _{p}(z) =\left\{ 
\begin{array}{ll}
\dfrac{1}{p-1}\  & \left( \mathrm{Re}\,z\geq 0\right) \\ 
\dfrac{\pi ^{1/2}\Gamma \left( \frac{1}{2}p-\frac{1}{2}\right) }{2\Gamma
\left( \frac{1}{2}p\right) } & \ \left( \mathrm{Re}\,z<0\right).
\end{array}
\right.
\label{eq91}
\end{equation}
\end{lemma}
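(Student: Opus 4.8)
The plan is to evaluate the arc-length integral $\int_{\infty}^{\xi}\left\vert dt/t^{p}\right\vert$ directly, by inserting the explicit parametrizations of the contour supplied in (\ref{eq86}) and (\ref{eq87}). Since those two rays are used under the complementary conditions $\mathrm{Re}(u\xi)\geq 0$ and $\mathrm{Re}(u\xi)<0$, and the value of $\Lambda_{p}$ in (\ref{eq91}) is defined by exactly the same dichotomy, I would treat the two cases separately and in each case reduce the modulus integral to a standard real integral. Throughout, the orientation of the path is immaterial because the integrand is taken in absolute value, so I may integrate the parameter in its increasing direction.

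First, suppose $\mathrm{Re}(u\xi)\geq 0$, so that $t=\tau\xi$ with $1\leq\tau<\infty$. Then $\left\vert dt\right\vert=\left\vert\xi\right\vert\,d\tau$ and $\left\vert t^{p}\right\vert=\tau^{p}\left\vert\xi\right\vert^{p}$ (using $\tau\geq 1>0$), so the integrand is $\tau^{-p}\left\vert\xi\right\vert^{1-p}\,d\tau$ and the integral collapses to $\left\vert\xi\right\vert^{1-p}\int_{1}^{\infty}\tau^{-p}\,d\tau=\left\vert\xi\right\vert^{1-p}/(p-1)$, giving the first branch of (\ref{eq91}). This is precisely where the hypothesis $p\geq 2$ (in fact $p>1$ would suffice) enters, to guarantee convergence at $\tau=\infty$.

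Next, suppose $\mathrm{Re}(u\xi)<0$ with $\pm\,\mathrm{Im}(u\xi)\geq 0$, so that $t=\xi(1\mp i\tau)$ with $0\leq\tau<\infty$. Here $\left\vert dt\right\vert=\left\vert\xi\right\vert\,d\tau$ while $\left\vert t\right\vert=\left\vert\xi\right\vert\sqrt{1+\tau^{2}}$, whence the integral becomes $\left\vert\xi\right\vert^{1-p}\int_{0}^{\infty}(1+\tau^{2})^{-p/2}\,d\tau$. The remaining step is to identify this elementary integral: the substitution $\tau=\tan\theta$ converts it into the Wallis integral $\int_{0}^{\pi/2}\cos^{p-2}\theta\,d\theta$, which by the Beta-function formula $\int_{0}^{\pi/2}\cos^{m}\theta\,d\theta=\tfrac{1}{2}\pi^{1/2}\Gamma(\tfrac{m+1}{2})/\Gamma(\tfrac{m}{2}+1)$ with $m=p-2$ equals $\pi^{1/2}\Gamma(\tfrac{1}{2}p-\tfrac{1}{2})/\bigl(2\Gamma(\tfrac{1}{2}p)\bigr)$, yielding the second branch of (\ref{eq91}).

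I do not anticipate any genuine obstacle: the only points requiring care are correctly forming the modulus of the complex differential $dt$ and of $t^{p}$ along each ray, and recognizing the resulting real integral as a Beta function. I would also remark that the sign ambiguity $\mp$ in the second case is harmless, since $\left\vert 1\mp i\tau\right\vert=\sqrt{1+\tau^{2}}$ independently of the sign, so a single computation disposes of both subcases at once.
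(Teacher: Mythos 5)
Your proposal is correct and follows essentially the same route as the paper: the same case split driven by the sign of $\mathrm{Re}(u\xi)$, the same parametrizations $t=\tau\xi$ and $t=\xi\mp i\xi\tau$ from (\ref{eq86}) and (\ref{eq87}), and the same reduction to $\int_{1}^{\infty}\tau^{-p}\,d\tau$ and $\int_{0}^{\infty}(1+\tau^{2})^{-p/2}\,d\tau$, the latter of which the paper simply states equals $\pi^{1/2}\Gamma(\tfrac{1}{2}p-\tfrac{1}{2})/\bigl(2\Gamma(\tfrac{1}{2}p)\bigr)$ while you supply the (correct) Wallis/Beta-function evaluation.
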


\begin{proof}
If $\mathrm{Re}(u \xi) \geq 0$ we have from (\ref{eq86}) 
\begin{equation}
\int_{\infty }^{\xi }{\left\vert \frac{dt}{t^{p}}\right\vert }=\frac{1}{
|\xi| ^{p-1}}\int_{1}^{\infty }\frac{d\tau}{\tau ^{p}}
=\frac{1}{\left( {p-1}\right) |\xi| ^{p-1}},
\label{eq88}
\end{equation}
and for $\mathrm{Re}(u \xi) <0$, using (\ref{eq87}), 
\begin{equation}
\int_{\infty }^{\xi }{\left\vert \frac{dt}{t^{p}}\right\vert }=\frac{1}{
|\xi| ^{p-1}}\int_{0}^{\infty }\frac{d\tau }{\left( 
\tau ^{2}+1\right) ^{p/2}}=\frac{\pi ^{1/2}\Gamma \left( {\frac{1}{2}}p-
\frac{1}{2}\right) }{2\Gamma \left( {\frac{1}{2}}p\right) \left\vert \xi
\right\vert ^{p-1}}.
\label{eq89}
\end{equation}
Combining these two gives the result.
\end{proof}

Then from the triangle inequality applied to (\ref{eq81}) and using (\ref
{eq85}) 
\begin{multline}
\int_{\infty }^{\xi }{\left\vert \chi _{n}^{(0)}(u,t) dt\right\vert }\leq \frac{2a_{n}\Lambda _{n+1}
(u\xi) }{|\xi| ^{n}} \\ +\frac{1}{\left\vert \xi
\right\vert ^{n+1}}\sum\limits_{s=0}^{n-2}{\frac{\Lambda _{n+s+2}(u\xi) }{\left\vert u\xi \right\vert ^{s}}\sum\limits_{k=s+1}^{n-1}{a_{k}a_{s+n-k}}},
\label{eq82}
\end{multline}
and 
\begin{equation}
\int_{\infty }^{\xi }{\left\vert {T_{n}^{(0) }(u,t)
dt}\right\vert }\leq \frac{1}{|\xi| }
\sum\limits_{s=0}^{n-2}\frac{a_{s+1}\Lambda _{s+2}(u \xi) }{
\left\vert u\xi \right\vert^{s}}.
\label{eq83}
\end{equation}

Now for $\mathrm{Re}\,z<0$ we have from (\ref{eq91}) and Stirling's formula \cite[Chap. 3, Eq. (8.16)]{Olver:1997:ASF} that $\Lambda _{p}(z) \sim \left\{ \pi
/\left( 2p\right) \right\} ^{1/2}$ as $p\rightarrow \infty $ , which suggests the
simplification
\begin{equation}
\Lambda _{p}(z) \leq \Lambda _{p}:=\dfrac{\pi ^{1/2}\Gamma
\left( \frac{1}{2}p-\frac{1}{2}\right) }{2\Gamma \left( \frac{1}{2}p\right) }
\ \left( p\geq 2\right),
\label{lambound}
\end{equation}
for all $z$. The following lemma establishes this to be true.

\begin{lemma}
For $p\geq 2$
\begin{equation}
\dfrac{1}{p-1}<\dfrac{\pi ^{1/2}\Gamma \left( \frac{1}{2}p-\frac{1}{2}
\right) }{2\Gamma \left( \frac{1}{2}p\right) }.
\label{G1}
\end{equation}
\end{lemma}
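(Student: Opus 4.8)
The plan is to recast both sides of (\ref{G1}) as integrals of a common nonnegative weight over $[0,\tfrac12\pi]$ and then compare the integrands pointwise. First I would invoke the integral representation already obtained in (\ref{eq89}), namely
$$\frac{\pi^{1/2}\Gamma\left(\tfrac12 p-\tfrac12\right)}{2\Gamma\left(\tfrac12 p\right)}=\int_{0}^{\infty}\frac{d\tau}{(1+\tau^2)^{p/2}},$$
and apply the substitution $\tau=\tan\theta$, under which $1+\tau^2=\sec^2\theta$ and $d\tau=\sec^2\theta\,d\theta$, so that $(1+\tau^2)^{-p/2}\,d\tau=\cos^{p-2}\theta\,d\theta$. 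This rewrites the right-hand side of (\ref{G1}) as the Wallis integral $\int_{0}^{\pi/2}\cos^{p-2}\theta\,d\theta$.

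Next I would recast the left-hand side in the same trigonometric form. While (\ref{eq88}) records $1/(p-1)=\int_{1}^{\infty}\tau^{-p}\,d\tau$, the more convenient identity is
$$\frac{1}{p-1}=\int_{0}^{\pi/2}\cos^{p-2}\theta\,\sin\theta\,d\theta,$$
which follows at once from $\frac{d}{d\theta}\bigl\{-(p-1)^{-1}\cos^{p-1}\theta\bigr\}=\cos^{p-2}\theta\,\sin\theta$ together with evaluation at the endpoints $\theta=0$ and $\theta=\tfrac12\pi$.

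Subtracting the two displays then reduces (\ref{G1}) to establishing
$$\int_{0}^{\pi/2}\cos^{p-2}\theta\,(1-\sin\theta)\,d\theta>0.$$
For $p\geq 2$ the exponent $p-2$ is nonnegative, so $\cos^{p-2}\theta\geq 0$ on $[0,\tfrac12\pi]$, while $1-\sin\theta\geq 0$ there as well; both factors are moreover strictly positive on $(0,\tfrac14\pi)$, so the integrand is nonnegative and strictly positive on a set of positive measure, whence the integral is strictly positive and the inequality follows. I do not anticipate a genuine obstacle here: the only substantive step is recognizing the substitution $\tau=\tan\theta$ that renders the two sides comparable, after which the result is an immediate consequence of the pointwise bound $\sin\theta\leq 1$ on $[0,\tfrac12\pi]$.
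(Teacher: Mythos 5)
Your proof is correct, and it takes a genuinely different route from the paper's. You convert both sides of (\ref{G1}) into Wallis-type integrals with the common weight $\cos^{p-2}\theta$ on $[0,\frac{1}{2}\pi]$ (the right side via the representation already recorded in (\ref{eq89}) together with $\tau=\tan\theta$, the left side via the elementary antiderivative $-(p-1)^{-1}\cos^{p-1}\theta$) and then conclude by the pointwise bound $\sin\theta\leq 1$, with strictness coming from positivity of the integrand on a set of positive measure. The paper instead rewrites the reciprocal of the claimed ratio as $\pi^{-1}B\left(\tfrac{1}{2}p,\tfrac{1}{2}\right)$ using $\Gamma\left(\tfrac{1}{2}p+\tfrac{1}{2}\right)=\tfrac{1}{2}(p-1)\Gamma\left(\tfrac{1}{2}p-\tfrac{1}{2}\right)$ and $\Gamma\left(\tfrac{1}{2}\right)=\pi^{1/2}$, then invokes monotonicity of the Beta integral (\ref{beta}) in its first argument to get the bound $\pi^{-1}B\left(1,\tfrac{1}{2}\right)=2/\pi<1$. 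The two arguments are cousins—both ultimately compare integrands under an integral representation—but the decompositions differ: the paper's version is shorter once the Beta function is in hand and yields the quantitative statement that the left side of (\ref{G1}) is at most $2/\pi$ times the right side for all $p\geq 2$, whereas your version avoids introducing the Beta function altogether, reuses machinery already present in the appendix, and makes the strict inequality transparent for every $p>1$ (not just $p\geq 2$) without needing any numerical comparison such as $2/\pi<1$. Both are complete; there is no gap in your argument.
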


\begin{proof}
\begin{equation}
\dfrac{2\Gamma \left( \frac{1}{2}p\right) }{\pi ^{1/2}\left( p-1\right)
\Gamma \left( \frac{1}{2}p-\frac{1}{2}\right) }=\dfrac{\Gamma \left( \frac{1}{2}p\right) \Gamma \left( \frac{1}{2}\right) }{\pi \Gamma \left( \frac{1}{2}p+\frac{1}{2}\right) }=\frac{1}{\pi }B\left( \tfrac{1}{2}p,\tfrac{1}{2}
\right),
\label{G2}
\end{equation}
where $B\left( p,q\right) $ is the Beta function \cite[Chap. 2, Sect. 1.6]
{Olver:1997:ASF} 
\begin{equation}
B\left( p,q\right) =\dfrac{\Gamma \left( p\right) \Gamma \left( q\right) }{
\Gamma \left( p+q\right) }=\int_{0}^{1}v^{p-1}\left( 1-v\right) ^{q-1}dv\
\left( \mathrm{Re}\,p>0,\ \mathrm{Re}\,q>0\right).
\label{beta}
\end{equation}
From its integral representation we see that this function is monotonically
decreasing as a function of (positive) $p$ for each fixed $q>0$. Therefore for $p\geq 2$
\begin{equation}
\dfrac{2\Gamma \left( \frac{1}{2}p\right) }{\pi ^{1/2}\left( p-1\right)
\Gamma \left( \frac{1}{2}p-\frac{1}{2}\right) }\leq \frac{1}{\pi }B\left( 1,
\tfrac{1}{2}\right) =\dfrac{\Gamma \left( \tfrac{1}{2}\right) }{\pi \Gamma
\left( \tfrac{3}{2}\right) }=\frac{2}{\pi }<1,
\label{G3}
\end{equation}
and the stated result follows.
\end{proof}

Now by uniqueness of recessive solutions $\mathrm{Ai}\left( {u^{2/3}\zeta }
\right) =c(u)\zeta ^{-1/4}{V_{n}^{(0) }}(u,\xi) $.
Then using
\begin{equation}
\mathrm{Ai}\left( {u^{2/3}\zeta }\right) \sim \frac{e^{-u\xi }}{2\pi
^{1/2}u^{1/6}\zeta ^{1/4}}\ \left( u\xi \rightarrow +\infty \right),
\label{Aiinfinity}
\end{equation}
we find $c(u)=\frac{1}{2}u^{-1/6}\pi ^{-1/2}$, and as a result (\ref{eq24}) follows
from (\ref{eq75}) with
\begin{equation}
\eta _{n}^{(0) }(u,\xi) 
:=e^{u\xi }\varepsilon _{n}^{(0) }(u,\xi).
\end{equation}
The bound (\ref{eq26}) on this error term follows from (\ref{eq79}), (\ref{eq82}), (\ref{eq83}) and (\ref{lambound}).

For the derivative of the Airy function we note that $\tilde{V}={\zeta }^{-1/4}\mathrm{Ai}^{\prime }\left( {u^{2/3}\zeta }\right) $ satisfies 
\begin{equation}
\frac{d^{2}\tilde{V}}{d\xi ^{2}}=\left\{ u^{2}+\frac{7}{36\xi ^{2}}
\right\} \tilde{V}.
\label{eq78}
\end{equation}
Then using 
\begin{equation}
\mathrm{Ai}^{\prime }\left( u^{2/3}\zeta \right) \sim -\frac{u^{1/6}{\zeta }
^{1/4}e^{-u\xi }}{2\pi ^{1/2}}\ \left( u\xi \rightarrow +\infty \right),
\label{eq78a}
\end{equation}
we obtain in a similar manner
\begin{equation}
\mathrm{Ai}^{\prime }\left( {u^{2/3}\zeta }\right) =-\frac{u^{1/6}{\zeta }^{1/4}}{2\pi ^{1/2}}\exp \left\{\sum\limits_{s=1}^{n-1}{(-1)
^{s}\frac{\tilde{a}_{s}}{su^{s}\xi ^{s}}}\right\} \left\{ {e^{-u\xi }+\tilde{\varepsilon}_{n}^{(0) }(u,\xi) }\right\},
\label{Aip}
\end{equation}
where $\tilde{a}_{1}=\tilde{a}_{2}=-\frac{7}{72}$ and subsequent
terms given by (\ref{arec}), and $\tilde{\varepsilon}_{n}^{(0)}(u,\xi)$ satisfies the same bounds (\ref{eq79}), (\ref{eq82}) and (\ref{eq83}) as $\varepsilon _{n}^{(0) }(u,\xi)$ but with $a_s$ replaced by $\left \vert \tilde{a}_s \right \vert$ throughout. This establishes
(\ref{eq25}).

\section*{Acknowledgments}
Financial support from Ministerio de Ciencia e Innovaci\'on, Spain, 
projects MTM2015-67142-P (MINECO/FEDER, UE) and PGC2018-098279-B-I00 (MCIU/AEI/FEDER, UE)
 is acknowledged. 

\bibliographystyle{siamplain}
\bibliography{biblio}
\end{document}